\theoremstyle{plain}
\newtheorem{lem}{Lemma}[section]
\newtheorem{thm}{Theorem}
\newtheorem*{thm*}{Theorem}
\newtheorem{cor}{Corollary}
\newtheorem*{cor*}{Corollary}
\theoremstyle{definition}
\newtheorem{df}{Definition}
\newtheorem{ob}{Observation}
\title{Quasi-strongly regular graphs on the flags of symmetric designs}
\author{Eugenia O'Reilly-Regueiro, Octavio B. Zapata-Fonseca}
\date{May 2025}
\begin{document}

\maketitle

\begin{abstract}
    This paper was inspired by a paper by Blokhuis and Brouwer \cite{BB} in which a definition of a graph on the flags of a biplane is given, and they prove that the graph corresponding to the unique $(11,5,2)$-biplane is determined by its spectrum. It is also inspired by the (\emph{different}) definition of flag-graph seen in the context of maps and abstract polytopes, \cite{Cu,Hu}. Here we use this definition for $(v,k,\lambda)$-BIBDs, and prove that if the design is symmetric then the graph is quasi-strongly regular. We will also use the definition given in~\cite{BB} for the case of biplanes and prove that this too, is a QSRG, (with different parameters). We investigate whether these graphs are determined by their spectra for some of the known biplanes.
\end{abstract}

\section{Introduction}
In a finite combinatorial incidence structure such as an abstract polytope which is simple and has a non-reflexive incidence relation, one can associate a ranked POSET, in which maximal chains are called {\em{flags}}. A balanced incomplete block design (BIBD) is a simple incidence structure and the incidence relation is non-reflexive, so the same applies, although this approach is not very useful in this case as flags have only two elements: a point and a block. In a ranked POSET, the {\em{flag-graph}} is defined to be the graph whose vertices are the flags of the POSET (maximal chains), and two vertices are adjacent if and only they differ in exactly one face (of a certain rank). See for example~\cite{Cu, Hu}. One of the motivations for this paper was to investigate this {\em{flag-graph}}, which we denote $\Gamma_1(D)$, in the case of a BIBD $D=(P,\mathcal{B})$. Some results we mention here have been published before, we cite and include them in the references, however for the sake of completeness we give our own proofs.\\

In this case, we prove the following:

\begin{thm*}[see Theorem \ref{t1} below]
     Let $D=(P,\mathcal{B})$ be a non-trivial $(v,b,r,k,\lambda)$-BIBD, and the flag-graph of $D$, $\Gamma_1(D)$ as defined above. Then one of the following holds:
     \begin{enumerate}[(i)]
         \item\label{t1i} $\lambda=1$ and $\Gamma_1(D)$ is a $(vr,k+r-2,\{r-2,k-2\},\{0,1\})$-AQSRG,
         \item $\lambda\geq 2$ and $\Gamma_1(D)$ is a $(vr,k+r-2,\{r-2,k-2\},\{0,1,2\})$-AQSRG.
     \end{enumerate}
 \end{thm*}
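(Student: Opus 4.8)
The plan is to verify the defining conditions of an AQSRG directly from the incidence combinatorics of flags, separating the two ways in which two flags can be adjacent and then treating the non-adjacent case. First I would fix notation: a vertex of $\Gamma_1(D)$ is a flag $(p,B)$ with $p\in B$, and two flags are adjacent precisely when they agree in exactly one coordinate --- either they share their point and differ in their block, or share their block and differ in their point. Counting flags gives $|V|=vr=bk$, and a fixed flag $(p,B)$ has $r-1$ neighbours sharing the point $p$ (the other blocks through $p$) and $k-1$ neighbours sharing the block $B$ (the other points of $B$), so $\Gamma_1(D)$ is regular of degree $k+r-2$. Non-triviality guarantees $r,k\ge 2$, so both neighbour types are non-empty and $r-2,k-2$ are non-negative.

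Next I would compute the common neighbours of an adjacent pair. If $(p,B)$ and $(p,B')$ are point-adjacent, a short case check shows that any common neighbour must again share the point $p$ and lie in a block distinct from both $B$ and $B'$; there are exactly $r-2$ such flags. Dually, a block-adjacent pair $(p,B),(p',B)$ has exactly $k-2$ common neighbours, all of the form $(q,B)$. Hence adjacent vertices have $r-2$ or $k-2$ common neighbours, giving the set $\{r-2,k-2\}$, with both values attained since point- and block-adjacent pairs both occur. (For a symmetric design $r=k$, so this collapses to the single value $k-2$, which is why symmetric designs yield a genuine QSRG rather than merely an AQSRG.)

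The crux is the non-adjacent case. Two non-adjacent flags $(p,B),(p',B')$ satisfy $p\neq p'$ and $B\neq B'$. Running through the four ways a candidate $(q,C)$ could be adjacent to each of them, the only surviving common neighbours are $(p,B')$ --- which exists iff $p\in B'$ --- and $(p',B)$ --- which exists iff $p'\in B$. So the number of common neighbours is exactly $[\,p\in B'\,]+[\,p'\in B\,]$, the number of these two incidences that hold, which lies in $\{0,1,2\}$; this establishes (ii), and $\lambda\ge 2$ lets one build a pair with both incidences holding, so $2$ is attained and the set is sharp. For (i) I would observe that both indicators equal $1$ exactly when $B$ and $B'$ are two distinct blocks each containing the pair $\{p,p'\}$; since $\lambda=1$ forbids two blocks on a common pair, the value $2$ cannot occur, leaving $\{0,1\}$.

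The main obstacle is ensuring each case analysis is exhaustive and correctly disposes of the ``mixed'' possibilities --- a candidate that shares a point with one flag but a block with the other. I expect these mixed cases to vanish for purely combinatorial reasons, each forcing either $p=p'$ or $B=B'$ and so contradicting the standing hypotheses; once that is confirmed, the only genuine content is the identification of the $\mu=2$ configuration with a repeated block on a pair of points, which is exactly where the hypothesis on $\lambda$ enters and distinguishes cases (i) and (ii).
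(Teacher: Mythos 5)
Your proposal is correct and follows essentially the same route as the paper: count flags, split the degree into point-neighbours and block-neighbours, compute $\eta\in\{r-2,k-2\}$ by the two adjacency types, and reduce the non-adjacent case to the two candidate common neighbours $(p,B')$ and $(p',B)$, with $\lambda=1$ excluding the value $2$. Your indicator-sum formulation $[\,p\in B'\,]+[\,p'\in B\,]$ is just a compact restatement of the paper's three-case analysis; the only (minor) difference is that the paper also exhibits explicit non-adjacent pairs realising each of the values $0$, $1$, and (when $\lambda>1$) $2$, whereas you verify attainability only for the value $2$.
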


 \begin{cor*}[Corollary \ref{c1}]
 If $D=(P,\mathcal{B})$ is a $(v,k,\lambda)$-symmetric design, and $\Gamma_1(D)$ is the flag-graph of $D$ defined above, then one of the following holds:
 \begin{enumerate}[(i)]
     \item If $\lambda=1$ then $\Gamma_1(D)$ is a $(vk,2(k-1),k-2;0,1)$-QSRG,
     \item If $\lambda>1$ then $\Gamma_1(D)$ is a $(vk,2(k-1),k-2;0,1,2)$-QSRG.
 \end{enumerate}
 \end{cor*}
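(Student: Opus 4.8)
The plan is to obtain Corollary \ref{c1} as a direct specialization of Theorem \ref{t1}. The first step is to record the standard fact that a $(v,k,\lambda)$-symmetric design is exactly a $(v,b,r,k,\lambda)$-BIBD in which $b=v$ and $r=k$: the relations $bk=vr$ and $\lambda(v-1)=r(k-1)$ are compatible with this, and $b=v$ is equivalent to $r=k$ by the first identity, so symmetry forces precisely these identifications. The non-triviality hypothesis required by Theorem \ref{t1} transfers to the symmetric designs under consideration, so $\Gamma_1(D)$ is the flag-graph of a BIBD to which the theorem applies verbatim, and the remaining work is to substitute $r=k$ into the parameter list and interpret the result.

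Carrying out the substitution, the number of vertices becomes $vr=vk$ and the valency becomes $k+r-2=2(k-1)$. The set of common-neighbour counts for adjacent pairs, which in the general theorem is $\{r-2,k-2\}$ (the value $r-2$ occurring for two flags sharing a point and $k-2$ for two flags sharing a block), now collapses to the single value $k-2$, since $r=k$. The sets of common-neighbour counts for non-adjacent pairs, namely $\{0,1\}$ when $\lambda=1$ and $\{0,1,2\}$ when $\lambda>1$, are inherited unchanged, yielding the two cases of the corollary.

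The one point requiring care is the upgrade from the \emph{AQSRG} conclusion of Theorem \ref{t1} to the stronger \emph{QSRG} conclusion of the corollary. Because every pair of adjacent flags now has exactly $k-2$ common neighbours, irrespective of whether they agree in their point or in their block, the graph satisfies the defining requirement of a quasi-strongly regular graph that adjacent vertices share a constant number of common neighbours, with only the non-adjacency parameter permitted to range over a set. I would check this collapse explicitly against the definition of QSRG used earlier, and would also confirm that the listed non-adjacency values are genuinely attained in the symmetric case, not merely bounded: here the defining intersection property of symmetric designs---any two distinct blocks meet in exactly $\lambda$ points, and dually any two points lie on exactly $\lambda$ common blocks---produces, for two non-adjacent flags $(p,B)$ and $(p',B')$, a common-neighbour count $[\,p\in B'\,]+[\,p'\in B\,]$ that realises each of $0,1$ and, exactly when $\lambda\ge 2$, also $2$.

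Since the whole statement reduces to a parameter substitution together with this definitional refinement, I expect no genuine obstacle beyond bookkeeping; the only place to remain vigilant is the coalescence of $\{r-2,k-2\}$ into the singleton $\{k-2\}$, as this is precisely what distinguishes the sharper QSRG conclusion for symmetric designs from the AQSRG conclusion holding for arbitrary BIBDs.
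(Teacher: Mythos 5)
Your proposal is correct and takes essentially the same route as the paper: the paper proves Corollary~\ref{c1} precisely by specialising Theorem~\ref{t1} via $v=b$ and $k=r$, which is your substitution argument. Your additional care about the collapse of $\{r-2,k-2\}$ to the single value $k-2$ (upgrading AQSRG to QSRG) and about the attainment of the non-adjacency values $0,1$ and, for $\lambda\ge 2$, also $2$, merely makes explicit what the paper leaves implicit in its one-line proof.
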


 \begin{thm*}[Theorem \ref{t3}]
  Let $D=(P,\mathcal{B})$ and $D'=(P',\mathcal{B}')$ be two $(v,k,\lambda)$-BIBDs, and $\Gamma_1(D)$, $\Gamma_1(D')$ their corresponding flag-graphs. Then $D\cong D'$ if and only $\Gamma_1(D)\cong\Gamma_1(D')$.  
\end{thm*}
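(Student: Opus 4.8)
The plan is to recover each design, up to isomorphism, from purely graph-theoretic data attached to its flag-graph, namely its maximal cliques. The forward implication is routine: a design isomorphism $D\to D'$ is a pair of bijections $P\to P'$ and $\mathcal B\to\mathcal B'$ preserving incidence, hence it carries flags to flags and preserves the relation ``differ in exactly one face'', so it is at once a graph isomorphism $\Gamma_1(D)\to\Gamma_1(D')$. All the work is in the converse.

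For the converse I would first describe the cliques of $\Gamma_1(D)$. Fixing a point $p$, the $r$ flags $(p,B)$ with $p\in B$ are pairwise adjacent (they agree in the point and differ in the block), forming what I call the \emph{point-clique} $C_p$; dually, the $k$ flags through a fixed block $B$ form the \emph{block-clique} $C_B$. I would then prove that these are exactly the maximal cliques. If a clique contains three flags, then comparing them in pairs and using that adjacent flags share either their point or their block, a short case analysis forces all three to share a common point or a common block; hence every clique of size $\ge 3$ lies in some $C_p$ or $C_B$, and every edge trivially does too. Since $r\ge k\ge 2$ for a non-trivial design, each $C_p$ and each $C_B$ is moreover maximal (no flag outside it is adjacent to all of its members). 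Thus the maximal cliques of $\Gamma_1(D)$ are precisely the $v$ point-cliques (of size $r$) and the $b$ block-cliques (of size $k$); two cliques of the same type are disjoint, a point-clique and a block-clique meet in at most one vertex, and each flag lies in exactly one clique of each type.

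Next I would encode the incidence structure graph-theoretically. Let $H(D)$ be the graph whose vertices are the maximal cliques of $\Gamma_1(D)$, two of them being adjacent precisely when they share a vertex. By the previous step $H(D)$ is bipartite with classes $\{C_p\}_{p\in P}$ and $\{C_B\}_{B\in\mathcal B}$, and $C_p$ is adjacent to $C_B$ if and only if they share the flag $(p,B)$, i.e.\ if and only if $p\in B$. Hence $H(D)$ is canonically the incidence (Levi) graph of $D$, and it is connected because $D$ is; in particular its bipartition is unique. A graph isomorphism $\phi\colon\Gamma_1(D)\to\Gamma_1(D')$ sends maximal cliques to maximal cliques and preserves the ``share a vertex'' relation, so it induces an isomorphism $H(D)\to H(D')$ of incidence graphs; reading off a design isomorphism amounts to matching the bipartition class $\{C_p\}$ with the class $\{C_{p'}\}$.

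The main obstacle is exactly this last matching, i.e.\ guaranteeing that $\phi$ carries point-cliques to point-cliques rather than to block-cliques. When $r\neq k$ this comes for free: $\phi$ preserves clique sizes, so the two classes are told apart by their cardinalities, the (unique) bipartition of $H(D)$ is oriented correctly, and one gets $D\cong D'$ on the nose. The delicate situation is $r=k$, the symmetric case, where point- and block-cliques have equal size and $\phi$ may interchange the two classes; this is intrinsic, since under the natural flag bijection $\Gamma_1(D)$ coincides with $\Gamma_1(D^{\ast})$, so the graph in general pins the design down only up to duality. I would therefore organise the argument by cases: the asymmetric case $r\neq k$ follows immediately from the size argument, while in the symmetric case $\Gamma_1(D)\cong\Gamma_1(D')$ yields $D\cong D'$ or $D\cong D'^{\ast}$, so that the stated equivalence holds verbatim once $r\neq k$ or once a design is identified with its dual.
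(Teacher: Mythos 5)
Your proposal is correct, and it takes a genuinely different route from the paper's. For the converse, the paper writes $\Gamma_1(D)=L(\Gamma_D)$ and invokes Whitney's theorem (Theorem \ref{whit}, via Observation \ref{K3}) to conclude $\Gamma_D\cong\Gamma_{D'}$, then passes from the incidence graphs back to the designs. You instead reconstruct the Levi graph inside $\Gamma_1(D)$ by hand: since $\Gamma_D$ is bipartite, hence triangle-free, every clique of $\Gamma_1(D)$ of size at least $3$ is a ``star'', so the maximal cliques are exactly the $v$ point-cliques of size $r$ and the $b$ block-cliques of size $k$, and your clique graph $H(D)$ is canonically the incidence graph. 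This is in essence a self-contained proof of Whitney's theorem in the one case needed here; it costs a short case analysis that the paper outsources to \cite{Wh}, and it buys explicit control over what a graph isomorphism does to the two classes of cliques.

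That control is exactly where your treatment is more careful than the paper's own proof. After obtaining $\Gamma_D\cong\Gamma_{D'}$, the paper asserts that the incidence graphs are ``directed (with edges starting at points and ending at blocks)'' and concludes that the isomorphism restricts to a map $P\to P'$; but Whitney's theorem produces an isomorphism of \emph{undirected} graphs, and nothing in the hypotheses orients it. As you observe, when $r\neq k$ the two sides are recognised intrinsically by clique size (equivalently by degrees in $\Gamma_D$), so the theorem is sound in the asymmetric case; when $r=k$ one can only conclude $D\cong D'$ or $D\cong (D')^{*}$, and this is best possible: for any symmetric $D$ the map $(p,c)\mapsto(c,p)$ identifies $\Gamma_1(D)$ with $\Gamma_1(D^{*})$, so taking $D'=D^{*}$ for a symmetric design not isomorphic to its dual (such designs exist, e.g.\ the Hall plane of order $9$) contradicts the statement verbatim. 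So your hedged conclusion in the symmetric case is not a gap in your argument but a correction the theorem itself needs. Two inessential nits: Fisher's inequality $r\geq k$ is not required for your clique classification ($r,k\geq 2$ suffices for maximality of $C_p$ and $C_B$), and the boundary case $k=2$, where a block-clique is a single edge yet is still maximal and contained in no point-clique, deserves an explicit line.
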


The other motivation for this paper was the paper Spectral characterization of a graph on the flags of the eleven point biplane by Blockhuis and Brower~\cite{BB}. In this article the authors consider a biplane, which is a $(v,k,2)$-symmetric design, and define another graph whose vertices are the flags of the biplane. This definition does not lend itself well to other BIBDs. Then they prove that the graph so defined, in the case of the unique $(11,5,2)$ biplane is determined by its spectrum. We wanted to examine these graphs for other known biplanes.\\

For this case, we prove the following:\\

\begin{thm*}[Theorem \ref{t4}]
    Let $D=(P,\mathcal{B})$ be a $(v,k,2)$-biplane, and $\Gamma_2(D))$ the graph on the flags of $D$ as defined above. Then $\Gamma_2(D))$ is a $(vk,k-1,0;0,1,2)$-QSRG.
\end{thm*}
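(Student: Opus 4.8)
The plan is to verify directly the four defining data of a quasi-strongly regular graph: the number of vertices, the regularity, the constant number $\lambda=0$ of common neighbours of adjacent pairs, and the set $\{0,1,2\}$ of possible common-neighbour counts of non-adjacent pairs. Throughout I use the standard numerology of a symmetric $(v,k,2)$-design: $b=v$ and $r=k$, every pair of distinct points lies in exactly $\lambda=2$ common blocks, and dually every pair of distinct blocks meets in exactly $2$ points. Writing a flag as an incident pair $(p,B)$ with $p\in B$, recall that $(p,B)$ and $(q,C)$ are adjacent in $\Gamma_2(D)$ exactly when $B\cap C=\{p,q\}$ (equivalently $p\ne q$, $B\ne C$, $p\in C$ and $q\in B$). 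The vertex count is immediate: there are $vr=vk$ incident point--block pairs. For the degree, fix a flag $(p,B)$; a neighbour $(q,C)$ forces $C$ to be a block through $p$ with $C\ne B$, and then $B\cap C=\{p,q_C\}$ determines the second point $q_C\ne p$ uniquely. Since $p$ lies on exactly $r=k$ blocks, there are $k-1$ choices of $C$, each giving a distinct neighbour, so $\Gamma_2(D)$ is $(k-1)$-regular.

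Next I would establish $\lambda=0$, i.e.\ that $\Gamma_2(D)$ is triangle-free. Suppose $(p,B)$, $(q,C)$, $(s,E)$ were a triangle, so that $B\cap C=\{p,q\}$, $C\cap E=\{q,s\}$ and $E\cap B=\{s,p\}$ with $p,q,s$ distinct. Then $q\in B$ (from the first relation) and $q\in E$ (from the second), so $q\in B\cap E=\{s,p\}$, contradicting $q\notin\{s,p\}$. Hence no triangle exists. For non-adjacent pairs, the key observation is that any common neighbour's block must pass through both distinguished points. Let $(p,B)$ and $(q,C)$ be non-adjacent and let $(s,E)$ be a common neighbour. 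From $B\cap E=\{p,s\}$ and $C\cap E=\{q,s\}$ one reads off $p,q\in E$; thus $E$ is one of the exactly $\lambda=2$ blocks through the point-pair $\{p,q\}$ (when $p=q$ or $B=C$ a short separate check shows there are no common neighbours at all). For each such block $E$ the point $s$ is then forced, as the unique second point of $B\cap E$, which must coincide with the unique second point of $C\cap E$. Consequently there are at most $2$ common neighbours, giving the claimed bound $\mu\in\{0,1,2\}$.

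Finally I would confirm that each of the values $0,1,2$ is actually attained, so that the parameter list is sharp; this is most cleanly done by pointing to small explicit biplanes (e.g.\ the $(7,4,2)$ and $(11,5,2)$ designs) and exhibiting non-adjacent flag pairs realising each count, the distinction being whether zero, one, or both of the two blocks through $\{p,q\}$ produce a matching second intersection point $s$. I expect the main obstacle to be the bookkeeping in this last step together with the careful case split for non-adjacent pairs: one must keep separate the degenerate configurations ($p=q$, or $B=C$, or $q\in B$ while $p\notin C$, and so on) from the generic one, and in each verify that the block of a putative common neighbour genuinely lies among the two blocks through $\{p,q\}$ and that the forced point $s$ is legitimate (distinct from $p,q$ and incident with $E$). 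The triangle-free argument and the degree count are routine; the delicate part is ensuring the upper bound of $2$ holds uniformly across all these cases and that the extremal values are not vacuous.
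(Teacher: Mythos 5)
Your core argument is correct and is essentially the paper's proof: the vertex count $vk$, the $(k-1)$-regularity (you count via the $k-1$ blocks $C\neq B$ through $p$, the paper via the $k-1$ points of $B\setminus\{p\}$ -- a trivially dual bookkeeping), the triangle-freeness, and above all the key observation that a common neighbour $(s,E)$ of non-adjacent $(p,B)$, $(q,C)$ must satisfy $p,q\in E$, so that $E$ is one of the exactly $\lambda=2$ blocks through $\{p,q\}$ and $s$ is then forced as the second point of $B\cap E$ -- this is exactly the paper's Case 3. Your deferred ``short separate checks'' for the degenerate non-adjacent configurations are the paper's Cases 1 and 2, and they do go through quickly: if $p=q$ then $B$, $C$, $E$ would be three distinct blocks through the pair $\{p,s\}$, contradicting $\lambda=2$; if $B=C$ then $\{p,s\}=B\cap E=\{q,s\}$ forces $p=q$.

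The one place your plan goes astray is the final step. First, attainment of every value in $\{0,1,2\}$ is not required by the definition of QSRG used here: the common-neighbour counts of non-adjacent pairs need only lie in the prescribed finite set, so ``sharpness'' is not part of the statement. Second, your proposed method cannot succeed as stated: the paper explicitly observes, after its proof, that in the examples examined non-adjacent pairs realise either only the values $\{0,1\}$ or only $\{0,2\}$ -- so proving that both $1$ and $2$ occur in \emph{every} biplane is hopeless, and exhibiting counts in the $(7,4,2)$ or $(11,5,2)$ biplane establishes nothing about an arbitrary $D$. What the paper proves instead, for a general biplane, is that the value $0$ always occurs (from the degenerate cases, which always have witnesses) and that some non-adjacent pair has at least one common neighbour, via an explicit construction: take blocks $c\cap d=\{x,y\}$, a point $p\in c$ with $p\neq x,y$, the second block $c_0$ through $\{x,p\}$ (so $c_0\cap c=\{x,p\}$), and $q$ the second point of $c_0\cap d=\{x,q\}$; then $(p,c)\nsim(q,d)$ while $(x,c_0)$ is a common neighbour of both. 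If you carry out your write-up, replace the example-based attainment step with this construction, or drop the attainment discussion entirely; everything else you have is sound.
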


\begin{thm*}[Theorem \ref{thm:iso2}]
    Let $D=(P,\mathcal{B})$ and $D'=(P',\mathcal{B}')$ be two $(v,k,2)$-biplanes, and $\Gamma_2(D)$, $\Gamma_2(D')$ the corresponding graphs on their flags. Then $D\cong D'$ if and only if $\Gamma_2(D)\cong\Gamma_2(D')$.
\end{thm*}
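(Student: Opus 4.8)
The plan is to prove the two implications separately, the forward one being routine and the converse requiring a combinatorial reconstruction of the biplane from its flag-graph, much in the spirit of the argument for $\Gamma_1$ in Theorem \ref{t3}. For the forward direction, suppose $\sigma\colon D\to D'$ is a design isomorphism, that is, a bijection $P\to P'$ together with a compatible bijection $\mathcal{B}\to\mathcal{B}'$ preserving incidence. It induces a bijection on flags via $(p,B)\mapsto(\sigma(p),\sigma(B))$. Since adjacency in $\Gamma_2$ is defined purely in terms of the incidence relation of the design (two incident pairs being adjacent exactly when their two points form the intersection of their two blocks), this map preserves adjacency and non-adjacency, hence is a graph isomorphism $\Gamma_2(D)\to\Gamma_2(D')$.

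For the converse I would recover $D$ from $\Gamma_2(D)$ as a combinatorial invariant. The natural objects are the \emph{point-pencils} $P_p=\{(p,C): p\in C\}$ and the \emph{block-rows} $L_B=\{(q,B): q\in B\}$, each an independent set of $k$ vertices (independence follows since two flags sharing a point, or sharing a block, are never adjacent). The reconstruction recovers the points of $D$ as the point-pencils, the blocks as the block-rows, and declares a point incident with a block precisely when the corresponding pencil and row meet in a flag, which they do in at most one. The key step is to characterise these two families intrinsically in the graph, and here I would exploit the QSRG structure established in Theorem \ref{t4}: since the graph is triangle-free with $\mu\in\{0,1,2\}$, I would analyse the common-neighbour function on non-adjacent pairs, verify that two flags lying in a common pencil or row have no common neighbour, and isolate the pencils and rows as the distinguished maximal independent sets on which this function behaves in a prescribed way. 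Once such an intrinsic description is in hand, any graph isomorphism $\phi\colon\Gamma_2(D)\to\Gamma_2(D')$ must send pencils and rows of $D$ to pencils and rows of $D'$, and so descends to an isomorphism of the reconstructed incidence structures, yielding $D\cong D'$; checking that the recovered triple is again a $(v,k,2)$-biplane is then routine.

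The main obstacle is the point–block symmetry of $\Gamma_2$. The adjacency relation is invariant under interchanging the roles of points and blocks, so $\Gamma_2(D)\cong\Gamma_2(D^{*})$ for the dual design $D^{*}$, and the common-neighbour analysis cannot, by itself, tell pencils from rows. I would therefore need to show that $\phi$ either respects the pencil/row dichotomy or interchanges the two families globally; in the former case $\phi$ produces a design isomorphism directly, and in the latter it produces an isomorphism $D\cong D'^{*}$. Controlling this dichotomy, either by a refined graph invariant that breaks the symmetry or by appealing to self-duality of the biplanes in question, is where the genuine difficulty lies. Establishing that the pencils and rows are truly recoverable from the bare graph, and resolving this duality, are the two points I expect to demand the most care, while the verification that the recovered structure is isomorphic to $D$ is a formality.
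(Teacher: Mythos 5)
Your forward direction is fine and matches the paper's. For the converse, your plan---recover each point of $D$ as the pencil $\{(p,C):C\ni p\}$ and each block as the row $\{(q,B):q\in B\}$, show that any isomorphism respects this family, and read off a design isomorphism---is in essence the route the paper takes: its Lemma~\ref{beta} asserts that \emph{every} isomorphism $\beta\colon\Gamma_2(D)\to\Gamma_2(D')$ has the form $\beta(p,c)=(\beta_P(p),\beta_{\mathcal{B}}(c))$, i.e.\ sends pencils to pencils and rows to rows. But your proposal is not yet a proof: you never carry out the intrinsic characterisation of the pencils and rows inside the bare graph, and you explicitly leave the pencil/row dichotomy unresolved, so the step carrying all the content is missing.

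That said, the obstacle you flagged is genuine, and it is fatal not only to your sketch but to the paper's lemma as stated. In a biplane, $(p,c)\sim(q,d)$ holds iff $p\neq q$, $c\neq d$ and $\{p,q\}\subseteq c\cap d$, equivalently iff $\{c,d\}$ is the pair of blocks through $p$ and $q$; this condition is invariant under interchanging the roles of points and blocks, so $(p,c)\mapsto(c,p^{*})$ is always an isomorphism $\Gamma_2(D)\to\Gamma_2(D^{*})$ onto the graph of the dual biplane $D^{*}$, and it takes pencils to rows. Concretely, for the $(7,4,2)$-biplane $\Gamma_2(D)$ is the Coxeter graph, whose automorphism group has order $336$: the $168$ collineations of the Fano plane preserve pencils and rows, while the correlation-induced automorphisms swap the two families and thereby violate the conclusion of Lemma~\ref{beta} with $D'=D$. (The lemma's proof goes wrong where it asserts that the preimages of $(p_1',d_1'),(p_1',d_2'),(q',d_1'),(q',d_2')$ must be exactly $(p_1,c_i),(p_1,c_j),(x_{ij},c_i),(x_{ij},c_j)$, tacitly assuming $c_i',c_j'\in\{d_1',d_2'\}$, which is unjustified.) Consequently the strongest conclusion this method can deliver---and, unless every $(v,k,2)$-biplane is self-dual, which neither you nor the paper proves, the strongest true statement---is that $\Gamma_2(D)\cong\Gamma_2(D')$ iff $D\cong D'$ or $D\cong(D')^{*}$: for a non-self-dual biplane $D$, taking $D'=D^{*}$ gives cospectral, indeed isomorphic, graphs with $D\not\cong D'$. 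So your instinct to fall back on self-duality of the particular biplanes considered is the right repair, but as written your argument has two open gaps (the intrinsic recovery of the pencil-row family, and the global coherence of the swap), and the paper's own proof does not close the second one either.
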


In the first case, we denote the flag-graph of a non-trivial $(v,b,r,k,\lambda)$-BIBD $D=(P,\mathcal{B})$ as $\Gamma_1(D)$, and define its vertices to be the flags of $D$, with two vertices $(p,c)$ and $(q,d)$ adjacent if and only if $p=q$ or $c=d$. Note this is equivalent to the flags differing in exactly one face. With this definition, $\Gamma_1(D)$ is the line graph $L(\Gamma_D)$ of the incidence graph $\Gamma_D$ of $D$.\\

In 1965, Hoffman proved in~\cite{Hf} that if $D$ is a finite projective plane of order $n$, then the line graph $L(\Gamma_D)$ of its incidence graph $\Gamma_D$ is regular, and connected, with $(n+1)(n^2+n+1)$ vertices (the number of flags of a projective plane of order $n$), and the eigenvalues of its adjacency matrix $A(L(\Gamma_D))$ are $2n$, $-2$, and $n-2\pm\sqrt{n}$. Conversely, if $\Gamma$ is a regular and connected graph on $(n+1)(n^2+n+1)$ vertices such that its adjacency matrix $A(\Gamma)$ has such eigenvalues, then it is the line graph of the incidence graph of a projective plane of order $n$. That same year, in a similar paper~\cite{HR1} Hoffman and Ray-Chaudhury proved that if $D$ is a finite affine plane, then the line graph of its incidence graph is regular, connected, with $n^2(n+1)$ vertices, and the eigenvalues of its adjacency matrix are $2n-1$, $-2$, $\frac{1}{2}(2n-3)\pm\sqrt{(4n+1)}$, $n-2$. As above, they proved the converse: If $\Gamma$ is a regular and connected graph on $n^2(n+1)$ vertices and $A(\Gamma)$ has such eigenvalues, then $\Gamma$ is the line graph of the incidence graph of an affine plane of order $n$. Notice that in the case of the projective plane, which is a symmetric design, there are 4 eigenvalues, whereas in the case of the affine plane, which is not symmetric there are 5. In a third paper that year~\cite{HR2}, the same authors proved that if $D$ is a $(v,k,\lambda)$-symmetric design, then $L(\Gamma_D)$ is regular and connected on $vk$ vertices, and $A(L(\Gamma_D))$ has eigenvalues $2k-2$, $-2$, and $k-2\pm\sqrt{k-\lambda}$. As in the previous papers, they proved the converse: if $\Gamma$ is a regular and connected graph on $vk$ vertices with such eigenvalues, then it is the line graph of the incidence graph of a $(v,k,\lambda)$-symmetric design, however here there is exactly one exception. In the case $(v,k,\lambda)=(4,3,2)$ there is exactly one other graph with the same number of vertices and the same spectrum which is not isomorphic to the line graph of the incidence graph of a $(4,3,2)$-biplane, and the authors show this graph in the paper.\\

In 1969, in~\cite{RR}, Rao and Rao proved that if $D$ is an asymmetric $(v,b,r,k,1)$-BIBD with $r-2k+1<0$, then the line graph of the incidence graph of $D$ is almost as in our theorem~\ref{t1}~(\ref{t1i}), the only difference being that they proved that non-adjacent vertices have at most one common neighbour, we prove here that there are non-adjacent vertices with no common neighbours, and also with one common neighbour. Finally, in 1975, in~\cite{Db}, Doob proved that if $D$ is a $(v,b,r,k,1)$-BIBD and $r+k>18$, then the line graph of the incidence graph of $D$ is characterised by its spectrum. This is a summary of the results we found regarding the line graph of the incidence graph of a BIBD, which we denote $\Gamma_1(D)$.\\

The earliest reference we found for {\em{quasi-strongly regular graph}} (QSRG) is a paper by Antonucci~\cite{An}, from 1987.\\

In the second case, for a biplane $D$, we denote the graph on its flags by $\Gamma_2(D)$, and as defined in~\cite{BB}, its vertices are the flags of $D$, and two flags $(p,c)$ and $(q,d)$ are adjacent if and only if $\{p,q\}=c\cap d$. In \cite{BB} Blockhuis and Brouwer proved that is $D$ is the unique $(11,5,2)$-biplane and $\Gamma_2(D)$ is the graph defined on its flags as vertices and two vertices $(p,c)$ and $(q,d)$ adjacent if and only if $\{p,q\}=c\cap d$, then $\Gamma_2(D)$ is determined by its spectrum.\\

We would like to point out that we mean different things when we say a graph is {\em{characterised}} or {\em{determined}} by its spectrum. As we mentioned before, if $D$ is a $(v,k,\lambda)$-symmetric design (with $(v,k,\lambda)\neq(4,3,2)$), $\Gamma_D$ is its incidence matrix, and the line graph $L(\Gamma_D)=\Gamma_1(D)$ is its flag-graph, then as proved in \cite{HR2}, $\Gamma_1(D)$ is characterised by its spectrum. This means that if $H$ is a regular, connected graph on $vk$ vertices and it is cospectral with $\Gamma_1(D)$, then it is the line graph of the incidence graph of a $(v,k,\lambda)$-symmetric design $D'$, (that is, it is $\Gamma_1(D')$). This does not mean, however, that $H$ is isomorphic to $\Gamma_1(D)$ (unless $D$ and $D'$ are isomorphic). We do not have an equivalent result for $\Gamma_1(D)$ when $D$ is a non-symmetric BIBD, nor do we have an equivalent result for $\Gamma_2$ for biplanes. When Blockhuis and Brouwer proved in \cite{BB} that $\Gamma_2(D)$ when $D$ is the unique $(11,5,2)$-biplane is determined by its spectrum, it means that any graph with the  same spectrum is in fact isomorphic to $\Gamma_2(D)$.\\

For basic definitions and results on Design Theory, we refer the reader to~\cite{St}, and for Graph Theory, see~\cite{GR}.\\

The paper is organised as follows:\\
We start with Section \ref{prem} in which we mention some of the basic definitions and results in design and graph theory that we will be using. We continue with Section \ref{gc} in which we define $\Gamma_1$ and prove results relative to this graph. We would like to note that the fact that $\Gamma_1(D)$ is a QSRG when $D$ is symmetric is a corollary to the more general theorem we prove for the structure of $\Gamma_1$ for BIBDs in general. We follow with Section \ref{bp} where we shift our attention to biplanes, and prove our results for $\Gamma_2$, and finish with Section \ref{sp}, where we mention the spectra of some of these graphs.

\section{Preliminaries}\label{prem}
Given $v,b,r,k$, and $\lambda$ in $\mathbb{Z}^+$ with $k>1$, a $(v,b,r,k,\lambda)$-balanced incomplete block design (BIBD) $D=(P,\mathcal{B})$ is an incidence structure where $P$ is a set of $v$ points and $\mathcal{B}$ is a set of $b$ blocks, each of which is a $k$-subset of $P$, and every (non-ordered) pair of points is contained in exactly $\lambda$ blocks. We ask that $D$  be non-trivial, that is, $v-k>1$. In a $(v,b,r,k,\lambda)$-BIBD, each point is in exactly $r$ blocks. A $(v,v,k,k,\lambda)$-BIBD is called {\em{symmetric}}, and is usually denoted by $(v,k,\lambda)$-symmetric design. In a $(v,b,r,k,\lambda)$-BIBD, $vr=bk$ and $\lambda(v-1)=r(k-1)$. If the design is symmetric then $\lambda(v-1)=k(k-1)$. A {\em{flag}} in a $(v,b,r,k,\lambda)$-BIBD is an incident point-line (ordered) pair, so the number of flags is $vr=bk$. (See\cite{St}).\\

Given a $(v,b,r,k,\lambda)$-BIBD $D=(P,\mathcal{B})$, its {\em{incidence graph}} $\Gamma_D$ is a bipartite graph whose vertices are the disjoint sets $P\cup\mathcal{B}$, and two vertices (a point and a block) are adjacent if and only if they are incident in $D$. Given any graph $\Gamma$, its {line graph} $L(\Gamma)$ is a graph whose vertices are the edges of $\Gamma$ and two vertices are adjacent if and only if the corresponding edges in $\Gamma$ share exactly one common vertex (we do not want multiple edges). Given these definitions, if $D=(P,\mathcal{B})$ is a $(v,b,r,k,\lambda)$-BIBD, and $\Gamma_D$ is its incidence graph, then $L(D)$, the line graph of $D$ has the flags of $D$ as its vertices, and two vertices $(p,c)$, $(q,d)$ in $L(D)$ are adjacent if and only either $p=q$ or $c=d$. A graph $\Gamma$ is {\em{regular}} if every vertex has the same degree, it is $t$-regular if this number is $t$. A graph $\Gamma$ is a {\em{strongly regular graph}} (SRG) if it is regular, and also: every pair of adjacent vertices has the same number of common neighbours, and every pair of non-adjacent vertices has the same number of common neighbours. We would like to caution that the parameters generally used for SRGs overlap with those generally used for BIBDs, therefore we will change notation, and denote $\Gamma=(n,t,\eta,\mu)$ a strongly regular graph on $n$ vertices which is $t$-regular, such that any two adjacent vertices have $\eta$-common neighbours and any two non-adjacent neighbours have $\mu$-common neighbours. The earliest reference we found for a {\em{quasi-strongly regular graph}} is \cite{An}. A {\em{quasi-strongly regular graph}} (QSRG) $\Gamma$ is a regular graph in which any two common vertices have the same number of common neighbours, and any two non-adjacent vertices have a number of common neighbours which is in a finite set of possibly more than one element. That is, it differs from a SRG in that there can be more than one value for $\mu$. We will define an {\em{almost-quasi-strongly regular graph}} (AQSRG) $\Gamma$ to be a regular graph in which any two adjacent vertices have a number of common neighbours which is in a fixed finite set (as in a quasi-edge regular graph, that is, $\eta$ can take different values), and the same happens for non-adjacent vertices ($\mu$ can take different values).\\

\section{General case}\label{gc}
In this section we will adapt the flag-graph commonly used in the context of maps, abstract polytopes, and maniplexes, in which the vertices of the graph are the flags (maximal chains) and two flags are adjacent if and only if they differ in only one ($i-$) face. In the case of designs, a flag only has a point and a block, so differing in only one ``face'' is equivalent to being equal in only one ``face'', that is, having the same point, or the same block (but not both, we do not want loops). We will use a subscript 1 for this graph, to distinguish it from the other graph on flags we will define later, to which we will add a subscript 2.

\begin{df}
  Let $D=(P,\mathcal{B})$ be a non-trivial $(v,b,r,k,\lambda)$-BIBD. We define the \emph{flag-graph $\Gamma_1(D)=\big(V(\Gamma_1(D),E(\Gamma_1(D)\big)$} of $D$ as follows:
  \begin{enumerate}
      \item $V(\Gamma_1(D))=\{(p,c) |p\in P, c\in\mathcal{B}, p\in c\}$, that is, the flags of $D$.
      \item For any two different vertices $(p,c),(q,d)\in V(\Gamma_1(D))$, $\{(p,c),(q,d)\}\in E(\Gamma_1(D))$ if and only if either $p=q$ or $c=d$.
  \end{enumerate}
  When two vertices $(p,c)$ and $(q,d)$ form and edge, we say they are \emph{adjacent}, and we will denote this by $(p,c)\sim(q,d)$.
\end{df}

 Note $\Gamma_1(D)$ is \emph{simple}, that is, it has no loops nor multiple edges. 

 \begin{thm}\label{t1}
     Let $D=(P,\mathcal{B})$ be a non-trivial $(v,b,r,k,\lambda)$-BIBD, and the flag-graph of $D$, $\Gamma_1(D)$ as defined above. Then one of the following holds:
     \begin{enumerate}[(i)]
         \item $\lambda=1$ and $\Gamma_1(D)$ is a $(vr,k+r-2,\{r-2,k-2\},\{0,1\})$-AQSRG, 
         \item $\lambda\geq 2$ and $\Gamma_1(D)$ is a $(vr,k+r-2,\{r-2,k-2\},\{0,1,2\})$-AQSRG.
     \end{enumerate}
 \end{thm}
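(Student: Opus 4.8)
The plan is to verify each defining property of an AQSRG directly from the structure of $\Gamma_1(D)$, working with a fixed vertex (flag) $(p,c)$ and counting. The three things to establish are: (1) regularity, i.e. every vertex has degree $k+r-2$; (2) the possible numbers of common neighbours of two \emph{adjacent} vertices, which I claim lie in $\{r-2,k-2\}$; and (3) the possible numbers of common neighbours of two \emph{non-adjacent} vertices, which I claim lie in $\{0,1\}$ when $\lambda=1$ and in $\{0,1,2\}$ when $\lambda\geq 2$. The number of vertices is simply the number of flags, $vr=bk$, as recorded in the Preliminaries.

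\textbf{Regularity and the adjacent case.} First I would fix a flag $(p,c)$ and observe that its neighbours split into two disjoint families: the flags $(p,d)$ with $d\neq c$ a block through $p$ (there are $r-1$ of these, since $p$ lies in $r$ blocks), and the flags $(q,c)$ with $q\neq p$ a point of $c$ (there are $k-1$ of these, since $c$ has $k$ points). These families are disjoint because a flag sharing both the point and the block with $(p,c)$ would equal $(p,c)$. Hence $\deg(p,c)=(r-1)+(k-1)=k+r-2$, giving regularity. For two adjacent vertices I split into the two adjacency types. If they share the point, say $(p,c)\sim(p,d)$, a common neighbour either shares the point $p$ with both (and any of the remaining $r-2$ blocks through $p$ works) or shares a block with each; but a flag sharing block $c$ with the first and block $d$ with the second would need $c=d$, impossible, so no such flag exists. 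This yields $r-2$ common neighbours. Symmetrically, if they share the block, $(p,c)\sim(q,c)$, the common neighbours are the $k-2$ remaining points of $c$, so the value is $k-2$. Thus adjacent pairs have either $r-2$ or $k-2$ common neighbours, explaining the set $\{r-2,k-2\}$.

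\textbf{The non-adjacent case — the main obstacle.} Here I take two flags $(p,c)$ and $(q,d)$ that are non-adjacent, so $p\neq q$ and $c\neq d$. A common neighbour $(x,e)$ must be adjacent to each, and since it can agree with $(p,c)$ in either coordinate and likewise with $(q,d)$, there are a priori four cases to analyse: agree in point with both (forcing $x=p=q$, impossible), agree in block with both (forcing $e=c=d$, impossible), or the two ``mixed'' cases. The mixed case $x=p,\ e=d$ contributes a common neighbour exactly when $p\in d$, i.e. $(p,d)$ is itself a flag; similarly $x=q,\ e=c$ contributes exactly when $q\in c$. So the count of common neighbours equals the number of the two conditions $p\in d$ and $q\in c$ that hold, which is at most $2$, giving the bound $\mu\in\{0,1,2\}$ in general. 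The key refinement is to show that when $\lambda=1$ the value $2$ cannot occur: if both $p\in d$ and $q\in c$ held, then the two distinct points $p,q$ would both lie in each of the two distinct blocks $c,d$, so the pair $\{p,q\}$ would be contained in at least two blocks, contradicting $\lambda=1$. This rules out $2$ and leaves $\{0,1\}$, which is exactly the dichotomy of the theorem.

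\textbf{Sharpness.} To finish I would argue that each listed value is genuinely attained, so that the sets are not artificially large. For the adjacent case, both $r-2$ and $k-2$ occur because there exist point-sharing and block-sharing adjacent pairs (using non-triviality $v-k>1$ and $k>1$ to ensure $r>1$ as well). For the non-adjacent case, exhibiting a pair with $0$ common neighbours, one with $1$, and (when $\lambda\geq 2$) one with $2$ is a matter of choosing points and blocks realizing each combination of the incidence conditions $p\in d$, $q\in c$; here $\lambda\geq 2$ guarantees a repeated-pair configuration producing the value $2$. The main obstacle is precisely this case analysis for non-adjacent vertices and the clean identification of the common-neighbour count with the incidence conditions $p\in d$ and $q\in c$; once that reduction is made, the $\lambda=1$ versus $\lambda\geq 2$ split follows immediately from the definition of $\lambda$.
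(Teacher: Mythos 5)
Your proposal is correct and follows essentially the same route as the paper: the same split of the neighbourhood of a flag $(p,c)$ into the $r-1$ point-sharing and $k-1$ block-sharing flags, the same two adjacency types giving $r-2$ and $k-2$, and the same reduction of the common neighbours of a non-adjacent pair to the two candidate flags $(p,d)$ and $(q,c)$ governed by the incidence conditions $p\in d$, $q\in c$, with $\lambda=1$ excluding the value $2$. Your attainment argument is only sketched where the paper carries it out explicitly (e.g.\ using $c\neq d$ with $|c|=|d|=k$ to find $p\in c\setminus d$, $q\in d\setminus c$, and $\lambda\geq 2$ to realise the value $2$), but the sketch is routine to complete and matches the paper's constructions.
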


 \begin{cor}\label{c1}
 If $D=(P,\mathcal{B})$ is a $(v,k,\lambda)$-symmetric design, and $\Gamma_1(D)$ is the flag-graph of $D$ defined above, then one of the following holds:
 \begin{enumerate}[(i)]
     \item If $\lambda=1$ then $\Gamma_1(D)$ is a $(vk,2(k-1),k-2;0,1)$-QSRG,
     \item If $\lambda>1$ then $\Gamma_1(D)$ is a $(vk,2(k-1),k-2;0,1,2)$-QSRG.
 \end{enumerate}
 \end{cor}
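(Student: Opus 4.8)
The plan is to deduce this directly from Theorem~\ref{t1}, since a $(v,k,\lambda)$-symmetric design is precisely a $(v,v,k,k,\lambda)$-BIBD, i.e.\ a non-trivial BIBD with $b=v$ and $r=k$, as recorded in the preliminaries. So I would first note that for a symmetric design the replication number equals the block size, $r=k$ (and $b=v$), and then simply substitute these values into the two parameter tuples supplied by Theorem~\ref{t1}.

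Carrying out the substitution: the number of vertices becomes $vr=vk$; the valency becomes $k+r-2=2(k-1)$; the set of common-neighbour counts for adjacent pairs, namely $\{r-2,k-2\}$, collapses to the single value $\{k-2\}$ because $r=k$; and the set of common-neighbour counts for non-adjacent pairs is inherited unchanged, that is $\{0,1\}$ when $\lambda=1$ and $\{0,1,2\}$ when $\lambda\ge 2$. This yields exactly the tuples claimed, now written in the QSRG notation $(n,t,\eta;\mu_1,\dots)$ in which the single adjacent value $\eta=k-2$ is separated by a semicolon from the list of admissible non-adjacent values. The case split on $\lambda$ is then inherited verbatim from the two cases of Theorem~\ref{t1}.

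The only point that needs an explicit word is why the conclusion is now a \emph{QSRG} rather than merely an AQSRG. By definition an AQSRG permits the number of common neighbours of adjacent vertices to range over a set with possibly more than one element, whereas a QSRG requires this number to be constant. Since $r=k$ forces the two a priori distinct adjacent values $r-2$ and $k-2$ to coincide, every pair of adjacent flags has exactly $k-2$ common neighbours, so constancy of the adjacent count is automatic and the graph meets the stronger QSRG definition. I expect no genuine obstacle here: all the substantive work — the valency, the adjacent counts $r-2$ and $k-2$ coming from the two ways two flags can share a point or a block, and the trichotomy $\{0,1,2\}$ for non-adjacent pairs with $2$ excluded exactly when $\lambda=1$ — is already discharged in Theorem~\ref{t1}. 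The corollary is essentially a specialization, and the single fact worth flagging is the collapse $\{r-2,k-2\}\to\{k-2\}$ that upgrades the statement from AQSRG to QSRG.
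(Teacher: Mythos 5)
Your proposal is correct and matches the paper's own proof, which likewise deduces the corollary from Theorem~\ref{t1} by substituting $v=b$ and $r=k$. Your explicit remark that the collapse $\{r-2,k-2\}\to\{k-2\}$ is what upgrades the conclusion from AQSRG to QSRG is the same (implicit) observation the paper relies on, just spelled out.
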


 The only difference between the two cases is that when $\lambda=1$, the number of common neighbours of two non-adjacent vertices is either 0 or 1, and when $\lambda>1$, this number is either 0, 1, or 2. So we will do one proof for both cases and only distinguish them at the end.

 \begin{proof}
     Since the set of vertices of the graph is the set of flags of the design, one has only to count the flags of the design to obtain the number of vertices. Each point is in $r$ blocks, each of which has $k$ points, so that gives $vr$. (Alternatively one can use $bk$, since there are $b$ blocks and each block has $k$ points).\\

     Now we prove $\Gamma_1(D)$ is $(k+r-2)$-regular:\\
     
     Let $(p,c)\in V(\Gamma_1(D))$. Then $(p,c)$ is a flag of $D$, and $p\in c$. There are $k-1$ points $x_i\in c$ different from $p$ such that for each one, the (flag of $D$) vertex (of $\Gamma_1(D)$) $(x_i,c)$ is adjacent to $(p,c)$. This counts $k-1$ different neighbours of $(p,c)$.\\

     On the other hand, $p$ is in $r-1$ blocks $c_j$ all different from $c$, so there are $r-1$ different vertices $(p,c_j)$ adjacent to $(p,c)$. The neighbours in the previous paragraph are all different to the ones in this paragraph, since $x_i\neq p$ and $c_i\neq c$ for all $i\in\{1,\ldots,k-1\}$, $j\in\{1,\ldots,r-1\}$. Therefore $\Gamma_1(D)$ is $(k+r-2)$-regular.\\

     Next we prove that any two adjacent vertices have either $k-2$ or $r-2$ common neighbours.\\

     Suppose $(p,c)$ and $(q,d)$ are adjacent. Then either $p=q$ or $c=d$.\\
     If $p=q$ then $c\neq d$ and there are $r-2$ blocks $c_j\neq c,d$ such that for each $j\in\{1,\ldots,r-2\}$, $p\in c_j$. Therefore, for each $j$, $(p,c)\sim(p,c_j)\sim(p,d)$ which gives $r-2$ common neighbours for $(p,c)$ and $(q,d)$.\\
     Now suppose $c=d$. Then $p\neq q$ and there are $k-2$ points $x_i\in c$ different from $p,q$, for all $i\in\{1,\ldots,k-2\}$. This gives $k-2$ vertices $(x_i,c)$ such that $(p,c)\sim(x_i,c)\sim(q,c)$ for all $i\in\{1,\ldots,k-2\}$, so $(p,c)$ and $(q,d)$ have $k-2$ common neighbours.\\

     Finally, we check that if $\lambda=1$, then any two non-adjacent vertices have either 0 or 1 common neighbours, and if $\lambda>1$ then they have either 0, 1, or 2 common neighbours.\\

Let $(p,c)$ and $(q,d)$ be two non-adjacent vertices. Then $p\neq q$ and $c\neq d$. We have two possibilities in the case $\lambda=1$ and an additional possibility for $\lambda>1$. We leave this additional possibility at the end.
\begin{enumerate}
    \item $p\notin d$ and $q\notin c$. In this case there can be no common neighbour. Suppose, to the contrary, that $(z,c_0)$ is a common neighbour. Then $(p,c)\sim(z,c_0)\sim(q,d)$.\\
    If $z=p$ then $z\neq q$, so $c_0=d$ which implies $z=p\in d$, which is a contradiction.\\
    If $z\neq p$ then $c_0=c\neq d$ which implies $z=y$ so $y\in c$, another contradiction. This forces $(p,c)$, $(q,d)$ to have 0 common neighbours.\\
    Note we can always find two non adjacent vertices satisfying this condition. Take any two blocks $c$ and $d$. Since there are no repeated blocks, $c\neq d$, and they both have the same cardinality, $k<v-1$ since $D$ is non-trivial. This implies that there is a point $p\in c\setminus d$ and there is a point $q\in d\setminus c$.\\
    \item Without loss of generality  $p\in d$ and $q\notin c$. In this case they have one common neighbour: Since $p\in d$, there is a vertex $(p,d)$, and $(p,c)\sim(p,d)\sim(q,d)$. It remains to prove that this is the only common neighbour. Suppose $(z,c_0)$ is a common neighbour. Then $(p,c)\sim(z,c_0)\sim(q,d)$. If $z=p$ then $c_0\neq c$ which forces $c_0=d$ so $(z,c_0)=(p,d)$. If $z\neq p$ then $c_0=c\neq d$ which forces $z=q$ but this is a contradiction because $q\notin c$. Therefore $(p,c)$ and $(q,d)$ have only one common neighbour.\\
    We can also always find two vertices satisfying this condition. Take any flag $(p,c)$, since $p$ is in $r$ blocks, there is another block $d$ that contains $p$, and since there are no repeated blocks, $c\neq d$ so there is a point $q\in d\setminus c$.
    These are the only two possibilities if $\lambda=1$, however there is one more possibility for $\lambda>1$:
    \item $p\in d$ and $q\in c$. Notice this cannot happen if $\lambda=1$ because we have 2 points in 2 blocks. In this case, there are two common neighbours. As in the previous case, since $p\in d$ then $(p,d)$ is a vertex, and also, since $q\in c$ then $(q,c)$ is a vertex, they are different, and these are the only two common neighbours:\\
    Clearly $(p,c)\sim(p,d)\sim(q,d)$ and $(p,c)\sim(q,c)\sim(q,c)$. Now suppose $(z,c_0)$ is a common neighbour. Then $(p,c)\sim(z,c_0)\sim(q,d)$. If $z=p$ then $c\neq c_0=d$ so $(z,c_0)=(p,d)$, and if $z\neq p$ then $c_0=c$ and $z=y$ so $(z,c_0)=(y,c)$.\\
    This third possibility can also always occur provided $\lambda>1$, since any 2 points are in exactly $\lambda$ blocks. 
\end{enumerate}
     This proves the theorem.
 \end{proof}

 \begin{cor}
 If $D=(P,\mathcal{B})$ is a $(v,k,\lambda)$-symmetric design, and $\Gamma_1(D)$ is the flag-graph of $D$ defined above, then one of the following holds:
 \begin{enumerate}[(i)]
     \item If $\lambda=1$ then $\Gamma_1(D)$ is a $(vk,2(k-1),k-2;0,1)$-QSRG,
     \item If $\lambda>1$ then $\Gamma_1(D)$ is a $(vk,2(k-1),k-2;0,1,2)$-QSRG.
 \end{enumerate}
 \end{cor}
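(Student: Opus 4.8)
The plan is to obtain this corollary as an immediate specialisation of Theorem~\ref{t1}, using the defining relations of a symmetric design. First I would recall that a $(v,k,\lambda)$-symmetric design is precisely a $(v,v,k,k,\lambda)$-BIBD, so that the replication number and block size coincide, $r=k$ (equivalently $b=v$), and consequently the number of flags is $vr=vk$. The graph $\Gamma_1(D)$ of the corollary is the same flag-graph as in Theorem~\ref{t1}, so every conclusion of that theorem applies verbatim; it only remains to substitute $r=k$ into the parameter tuple.

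Next I would carry out the substitution parameter by parameter. The number of vertices becomes $vr=vk$, and the regularity becomes $k+r-2=2k-2=2(k-1)$, matching the stated valency. The crucial step is the set of common-neighbour counts for adjacent vertices: Theorem~\ref{t1} gives this as the set $\{r-2,k-2\}$, and under $r=k$ this collapses to the single value $\{k-2\}=\{k-2\}$. This is exactly what upgrades the graph from an AQSRG (in which $\eta$ may take several values) to a genuine QSRG (in which $\eta$ is a single fixed number while $\mu$ may still vary), in accordance with the definitions given in Section~\ref{prem}. Finally, the non-adjacent common-neighbour set is unchanged by the substitution, remaining $\{0,1\}$ when $\lambda=1$ and $\{0,1,2\}$ when $\lambda>1$, which yields the two cases~(i) and~(ii).

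The only point requiring a word of justification, rather than mere substitution, is why the single adjacent-value $k-2$ is genuinely attained and why both distinguished cases persist: in the proof of Theorem~\ref{t1} the value $r-2$ arises from the subcase $p=q$ (two flags on a common point) and the value $k-2$ from the subcase $c=d$ (two flags on a common block), and when $r=k$ these two counts agree, so there is nothing to reconcile. I expect no real obstacle here, since Theorem~\ref{t1} already establishes each count is always attained; the verification is simply that symmetry forces the two adjacent-values to coincide while leaving the non-adjacent analysis intact. I would therefore state the corollary as following directly from Theorem~\ref{t1} with $r=k$, emphasising the collapse $\{r-2,k-2\}\to\{k-2\}$ as the reason for the strengthening from AQSRG to QSRG.
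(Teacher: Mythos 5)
Your proposal is correct and follows exactly the paper's route: the paper's proof of Corollary~\ref{c1} is the one-line substitution $v=b$, $k=r$ into Theorem~\ref{t1}, which is precisely your argument. Your additional remark that the collapse $\{r-2,k-2\}\to\{k-2\}$ is what upgrades the graph from AQSRG to QSRG is a point the paper leaves implicit, but it is the same proof.
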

\begin{proof} 
    If $D$ is symmetric then $v=b$ and $k=r$, so the result follows.
\end{proof}

It now makes sense to ask whether having isomorphic designs is equivalent to having isomorphic graphs. We have the following:

\begin{thm}[\cite{Wh}]
\label{whit}
Let $\Gamma$ and $\Gamma'$ be connected graphs with isomorphic line graphs. Then $\Gamma$ and $\Gamma'$ are isomorphic unless one is $K_3$ and the other is $K_{1,3}$.
\end{thm}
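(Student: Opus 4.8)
The plan is to prove Whitney's theorem by reconstructing each connected graph $\Gamma$ from its line graph $L(\Gamma)$ in an essentially canonical way, and then showing that the reconstruction can fail to be unique only for the pair $K_3$, $K_{1,3}$. First I would set up the \emph{Krausz partition}. For each vertex $v\in V(\Gamma)$, the set $C_v$ of edges of $\Gamma$ incident with $v$ is a clique in $L(\Gamma)$ of size $\deg v$. Since $\Gamma$ is simple, any two adjacent edges share exactly one endpoint, so every edge of $L(\Gamma)$ lies in exactly one clique $C_v$, and every vertex of $L(\Gamma)$ (an edge $uw$ of $\Gamma$) lies in exactly the two cliques $C_u,C_w$. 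Thus $\{C_v:v\in V(\Gamma)\}$ partitions $E(L(\Gamma))$ into cliques covering every vertex exactly twice. Conversely, from such a partition one recovers $\Gamma$: its vertices are the cliques, two cliques are joined iff they meet in a vertex of $L(\Gamma)$ (with that shared vertex naming the corresponding edge of $\Gamma$), and singleton cliques encode leaves. Hence an isomorphism $L(\Gamma)\cong L(\Gamma')$ yields $\Gamma\cong\Gamma'$ as soon as such a partition is determined by the abstract graph $L(\Gamma)$ alone.

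The heart of the argument is therefore the uniqueness of the Krausz partition, which reduces to identifying intrinsically which maximal cliques of $L(\Gamma)$ are vertex-cliques. A short Helly-type observation shows that four pairwise-adjacent edges of $\Gamma$ must share a common vertex, so every maximal clique of $L(\Gamma)$ of size $\geq 4$ is a vertex-clique $C_v$ and is forced in any partition. The only ambiguity lives among triangles: a triangle $T=\{a,b,c\}$ of $L(\Gamma)$ can be read either as a star $C_v$ with $\deg v=3$ or as the image $\{xy,yz,zx\}$ of a triangle $\{x,y,z\}$ of $\Gamma$. To separate these I would use the overlap pattern of the surrounding vertex-cliques: in the star reading the three outward cliques $C_p,C_q,C_r$ each meet $T$ in a single vertex, whereas in the triangle reading the three cliques $C_x,C_y,C_z$ each contain exactly two vertices of $T$. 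A direct case analysis on the endpoints of the underlying edges establishes this dichotomy and shows, in particular, that no vertex of $L(\Gamma)$ outside a triangle-type $T$ is adjacent to all three of $a,b,c$.

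With this criterion in hand, the partition — and hence $\Gamma$ — is recovered uniquely whenever $\Gamma$ has enough structure, and the remaining task is to isolate the finitely many small graphs in which the triangle criterion is inconclusive. I expect this last step to be the main obstacle, since it is precisely here that the exception surfaces: for $\Gamma=K_3$ one has $L(K_3)=K_3$, and for $\Gamma=K_{1,3}$ one also has $L(K_{1,3})=K_3$, so the single triangle $K_3$ admits both the star reading and the triangle reading and cannot be assigned a canonical partition. Verifying by direct inspection that these are the only two connected graphs producing this coincidence, and that for every larger connected graph the criterion forces a unique Krausz partition, completes the proof.
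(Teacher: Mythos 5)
The paper offers no proof of this statement at all---it is quoted as a known theorem from Whitney's 1932 paper \cite{Wh} and used as a black box in the proof of Theorem~\ref{t3}---so your attempt can only be judged against the standard arguments in the literature. Your setup is sound as far as it goes: the vertex-stars $C_v$ do form a Krausz partition of $E(L(\Gamma))$ with every vertex covered exactly twice (singletons for leaves), your Helly-type observation is correct (three pairwise adjacent edges form a star or a triangle, and no fourth edge of a simple graph can be adjacent to all three edges of a triangle, so any clique of size at least $4$ in $L(\Gamma)$ is a star), and the ambiguity is indeed confined to triangles.

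The gap is in your last two steps, and it is fatal to the architecture of the argument, not just to a detail. Both your triangle dichotomy and your closing claim---that every connected graph other than $K_3$ and $K_{1,3}$ forces a \emph{unique} Krausz partition---are false. Take $\Gamma=K_4$, so that $L(K_4)\cong K_{2,2,2}$. There your adjacency test is silent: no vertex outside a triangle is adjacent to all three of its vertices, whether the triangle is a vertex-star $C_v$ with $\deg v=3$ or arises from a triangle of $K_4$ (an external edge always meets an even number of the three). The overlap criterion is moreover circular, since it presupposes knowing which surrounding triangles are vertex-cliques, and in fact an automorphism of $K_{2,2,2}$ interchanges the four star-triangles with the four triangle-triangles. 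Consequently $K_{2,2,2}$ has two genuinely different Krausz partitions (the four stars $C_x,C_y,C_z,C_w$, and the four triangles of $K_4$), and the same phenomenon occurs for the diamond $K_4-e$, whose line graph admits a second partition built from its two triangle-cliques plus two $2$-cliques. Whitney's conclusion survives in these cases only because both partitions happen to reconstruct isomorphic base graphs ($K_4$, respectively the diamond)---an argument your proposal never makes; as structured (``canonical partition, hence isomorphism'') the proof breaks precisely at these graphs. To repair it you must prove the weaker, correct statement: any two Krausz partitions of a connected line graph yield isomorphic base graphs unless that line graph is $K_3$, with the triangle-rich graphs of maximum degree $3$ (where no clique of size $\geq 4$ is available to anchor the partition) handled by a separate finite analysis; alternatively, abandon canonicity altogether and follow Whitney's original induction showing that isomorphisms of line graphs are induced by isomorphisms of the underlying graphs apart from the small exceptional cases, among which only the pair $K_3$, $K_{1,3}$ produces non-isomorphic base graphs.
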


\begin{ob}~\label{K3}
    If $D$ is a $(v,k,\lambda)$-BIBD, then its incidence graph $\Gamma_D$ is bipartite, and since $D$ has more than one point and one block, it follows that $\Gamma_D\neq K_3,K_{1,3}$.
\end{ob}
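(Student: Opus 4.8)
The plan is to verify the two assertions of the observation in turn: first that the incidence graph $\Gamma_D$ is bipartite, and then, using this together with the BIBD axioms, that $\Gamma_D$ is isomorphic to neither $K_3$ nor $K_{1,3}$ (the two exceptional graphs in Whitney's Theorem~\ref{whit}, which is why exactly these two must be excluded). Bipartiteness is immediate from the construction: the vertex set of $\Gamma_D$ is the disjoint union $P\cup\mathcal B$, and by definition every edge joins a point to a block that contains it, so no edge has both endpoints in $P$ or both in $\mathcal B$. Hence $(P,\mathcal B)$ is a bipartition and $\Gamma_D$ is bipartite.

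Ruling out $K_3$ is then essentially free: $K_3$ is a triangle, hence contains an odd cycle and is not bipartite, whereas $\Gamma_D$ is bipartite, so $\Gamma_D\not\cong K_3$. (Equivalently, $K_3$ has only three vertices while $\Gamma_D$ has $v+b$ of them.)

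The step I expect to require a little more care is ruling out $K_{1,3}$, because $K_{1,3}$ is itself bipartite and so cannot be excluded by parity alone; this is the main (though minor) obstacle. Here I would use the sizes of the two sides of the bipartition. The BIBD axioms give $k>1$ and, from non-triviality, $v-k>1$, whence $v>k+1\geq 3$; moreover $r(k-1)=\lambda(v-1)$ with $v-1>k-1\geq 1$ forces $r>1$, and $b\geq v$ (Fisher's inequality, or directly from $vr=bk$ with $r\geq 2$), so that both $v\geq 2$ and $b\geq 2$. Now an isomorphism $\Gamma_D\cong K_{1,3}$ would make $\Gamma_D$ connected; since a connected bipartite graph has a unique bipartition, this would force $\{v,b\}=\{1,3\}$, contradicting $v,b\geq 2$. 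A cleaner alternative avoiding connectivity is a minimum-degree argument: in $\Gamma_D$ every point vertex has degree $r\geq 2$ and every block vertex has degree $k\geq 2$, so $\Gamma_D$ has minimum degree at least $2$, whereas the three leaves of $K_{1,3}$ have degree $1$. Either route gives $\Gamma_D\not\cong K_{1,3}$ and completes the observation.
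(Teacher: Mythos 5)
Your proposal is correct and fills in exactly the reasoning the paper leaves implicit in the observation: bipartiteness excludes $K_3$, and the fact that both sides of the bipartition have at least two vertices (the paper's ``more than one point and one block'') excludes $K_{1,3}$, with your minimum-degree argument ($r\geq 2$, $k\geq 2$) a clean equivalent variant. One small caution: your parenthetical claim that $b\geq v$ follows ``directly from $vr=bk$ with $r\geq 2$'' is not right as stated (that is Fisher's inequality, equivalent to $r\geq k$), but it is harmless since you only need $v,b\geq 2$, which you establish independently.
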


\begin{thm}\label{t3}
  Let $D=(P,\mathcal{B})$ and $D'=(P',\mathcal{B}')$ be two $(v,k,\lambda)$-BIBDs, and $\Gamma_1(D)$, $\Gamma_1(D')$ their corresponding flag-graphs. Then $D\cong D'$ if and only $\Gamma_1(D)\cong\Gamma_1(D')$.  
\end{thm}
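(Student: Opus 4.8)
The plan is to prove the two implications separately: the forward direction directly, and the reverse direction by reducing to Whitney's theorem (Theorem~\ref{whit}). The one identification I will lean on throughout is the one recorded earlier in the excerpt, namely that $\Gamma_1(D)=L(\Gamma_D)$ and $\Gamma_1(D')=L(\Gamma_{D'})$, where $\Gamma_D,\Gamma_{D'}$ are the bipartite incidence graphs.

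For the forward direction, suppose $\sigma\colon D\to D'$ is a design isomorphism, i.e.\ a pair of bijections $P\to P'$ and $\mathcal{B}\to\mathcal{B}'$ (both written $\sigma$) preserving incidence. Then $(p,c)\mapsto(\sigma(p),\sigma(c))$ is a well-defined bijection on flags, and since it preserves both ``same point'' and ``same block'', it preserves adjacency in $\Gamma_1$; hence $\Gamma_1(D)\cong\Gamma_1(D')$. This step is routine and I would dispatch it in a line or two.

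For the reverse direction, assume $\Gamma_1(D)\cong\Gamma_1(D')$, that is $L(\Gamma_D)\cong L(\Gamma_{D'})$. First I would verify that $\Gamma_D$ and $\Gamma_{D'}$ are connected: since $\lambda\geq 1$, any two points lie in a common block and any two blocks share a common point, so any two vertices of $\Gamma_D$ are joined by a path. With connectedness established, Whitney's theorem applies, and by Observation~\ref{K3} the exceptional pair $\{K_3,K_{1,3}\}$ is excluded, so I conclude $\Gamma_D\cong\Gamma_{D'}$ as graphs.

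The remaining and substantive step is to promote a graph isomorphism $\phi\colon\Gamma_D\to\Gamma_{D'}$ to a design isomorphism. Since a connected bipartite graph has a unique bipartition, $\phi$ must send the classes $\{P,\mathcal{B}\}$ onto $\{P',\mathcal{B}'\}$, either preserving the roles (points to points, blocks to blocks), in which case $\phi$ is exactly a design isomorphism $D\cong D'$, or interchanging them, in which case $\phi$ is an anti-isomorphism yielding only $D\cong D'^{*}$. I expect this bipartition bookkeeping to be the main obstacle. When $D$ is non-symmetric we have $v\neq b$, so the two colour classes have different cardinalities, the swap is impossible, and $D\cong D'$ follows at once. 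The delicate case is the symmetric one ($v=b$), where the swap cannot be ruled out on cardinality grounds; because $\Gamma_{D'}=\Gamma_{D'^{*}}$ as abstract graphs, $\Gamma_1$ cannot in general separate a symmetric design from its dual, so here one must either invoke self-duality or read the conclusion up to duality. Everything outside this point is a direct application of Whitney's theorem together with the connectedness of the incidence graphs.
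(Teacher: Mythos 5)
Your proposal follows the same skeleton as the paper's proof: the forward direction by transporting flags through the design isomorphism, and the reverse direction by identifying $\Gamma_1(D)=L(\Gamma_D)$ and invoking Whitney's theorem (Theorem~\ref{whit}) together with Observation~\ref{K3} to obtain $\Gamma_D\cong\Gamma_{D'}$. Where you diverge is precisely the step the paper handles incorrectly, and your treatment is the sounder one. The paper declares $\Gamma_D$ and $\Gamma_{D'}$ to be \emph{directed} (edges starting at points and ending at blocks) and concludes that the isomorphism must restrict to bijections $P\to P'$ and $\mathcal{B}\to\mathcal{B}'$; but $\Gamma_1(D)$ is an undirected graph, Whitney's theorem produces an isomorphism of undirected graphs, and no orientation data is recoverable from the flag-graph, so nothing forces the isomorphism to respect the colour classes. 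Your bipartition bookkeeping is the correct substitute: in the non-symmetric case $v\neq b$ (equivalently $r\neq k$, so the two classes even have distinct degrees) rules out a swap, and you are right that in the symmetric case the swap cannot be excluded. Indeed, for a symmetric design $D'$ one has $\Gamma_1(D'^{*})=\Gamma_1(D')$, so if $D'$ is not isomorphic to its dual $D'^{*}$, cospectral-or-not, the graph isomorphism can only yield $D\cong D'$ or $D\cong D'^{*}$. Since non-self-dual symmetric designs exist, this is not a defect of your argument but a genuine gap in the theorem as stated and in the paper's proof: in the symmetric case the conclusion must be read up to duality, exactly as you flag.

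One small slip worth fixing: in verifying connectedness of $\Gamma_D$ you assert that any two blocks share a common point, which is false for non-symmetric designs (parallel blocks of a resolvable design are disjoint). Connectedness nevertheless holds, since every block contains a point and any two points lie in a common block ($\lambda\geq 1$), giving a path between any two vertices of $\Gamma_D$ through the point class.
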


\begin{proof}
    Let $D=(P,\mathcal{B})$ and $D'=(P',\mathcal{B}')$ be two $(v,k,\lambda)$-BIBDs, and $\Gamma_1(D)$, $\Gamma_1(D')$ their corresponding flag-graphs.\\
    
   First suppose $D\cong D'$, so there is a bijection $\alpha:P\to P'$ such that for any $p\in P$ and any $c\in\mathcal{B}$, $p\in c$ iff $\alpha(p)\in\alpha[c]$, where $\alpha[c]:=\{\alpha(x) | x\in c\}$. Since $\alpha$ induces a bijection on the sets of blocks $\mathcal{B}$, $\mathcal{B'}$ and preserves incidence and non-incidence, it induces a bijection $\overline{\alpha}$ on the set of flags of $D$ and $D'$, and so from $V(\Gamma_1(D))$ to $V(\Gamma_1(D'))$, where $\overline{\alpha}:V(\Gamma_1(D))\to V(\Gamma_1(D'))$ is defined by $\overline{\alpha}((p,c))=(\alpha(p),\alpha[c])$. It remains to show that it preserves adjacency and non-adjacency.\\
    Let $(p,c),(q,d)\in V(\Gamma_1(D))$. Then $(p,c)\sim(q,d)$ iff either $p=q$ or $c=d$, iff either $\alpha(p)=\alpha(q)$ or $\alpha[c]=\alpha[d]$, iff $(\alpha(p),\alpha[c])\sim(\alpha(q),\alpha[d])$ iff $\overline{\alpha}(p,c)\sim\overline{\alpha}(q,d)$.\\

For the converse, we will use Theorem~\ref{whit} and Observation~\ref{K3}. So now suppose $\Gamma_1(D)\cong\Gamma_1(D')$.\\

    Recall $\Gamma_D$ and $\Gamma_{D'}$ are the (directed) incidence graphs of $D$ and $D'$ respectively. Then $\Gamma_1(D)=L(\Gamma_D)$, the line graph of $\Gamma_D$, and similarly $\Gamma_1(D')=L(\Gamma_{D'})$. By Theorem~\ref{whit} and Observation~\ref{K3}, $\Gamma_D\cong\Gamma_{D'}$.\\

    Since $\Gamma_D\cong\Gamma_{D'}$, there is an bijection $\alpha:P\cup \mathcal{B}\to P'\cup \mathcal{B}'$ such that for any $p\in P$, $c\in \mathcal{B}$, $p\sim c$ if and only $\alpha(p)\sim \alpha(c)$. By definition of incidence graph, $p\sim c$ if and only if the point $p$ is incident with the block $c$, and $\alpha(p)\sim \alpha(c)$ if and only if $\alpha(p)$ and $\alpha(c)$ are incident. Both graphs $\Gamma_D$ and $\Gamma_{D'}$ are bipartite (with points on one set and blocks on the other) and directed (with edges starting at points and ending at blocks), therefore $\alpha\restriction _P:P\to P$, $\alpha\restriction _\mathcal{B}:\mathcal{B}\to\mathcal{B}$, and for any $p\in P$, $c\in \mathcal{B}$, $p\sim c$ if and only if $\alpha(p)$ is in $\alpha(c)$. That is, $\alpha$ induces a bijection of the points and of the of the blocks which preserves incidence, this implies $D\cong D'$.
\end{proof} 

\section{Biplanes}\label{bp}
 In this section we turn our attention to biplanes, which are $(v,k,2)$-symmetric designs. Given that in a $(v,k,\lambda)$-symmetric design the cardinality of the intersection of any two blocks is always $\lambda$, biplanes lend themselves to a nice definition of a graph on their set of flags, which was given by A. Blokhuis and A.E. Brouwer~\cite{BB} as follows:

\begin{df}
    Let $D=(P,\mathcal{B})$ be a $(v,k,2)$-biplane. We define a graph $\Gamma_2(D)=\big(V(\Gamma_2(D),E(\Gamma_2(D)\big)$ on the flags of $D$ as follows:
    \begin{enumerate}
        \item $V(\Gamma_2(D))=\{(p,c) |p\in P, c\in\mathcal{B}, p\in c\}$, that is, the flags of $D$.
        \item For any two vertices $(p,c),(q,d)\in V(\Gamma_2(D))$, $\{(p,c),(q,d)\}\in E(\Gamma_2(D))$ if and only if $c\cap d=\{p,q\}$.
    \end{enumerate}
\end{df}

\begin{thm}\label{t4}
    Let $D=(P,\mathcal{B})$ be a $(v,k,2)$-biplane, and $\Gamma_2(D))$ the graph on the flags of $D$ as defined above. Then $\Gamma_2(D))$ is a $(vk,k-1,0;0,1,2)$-QSRG.
\end{thm}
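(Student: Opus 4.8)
The plan is to establish the four parameters of the claimed QSRG in turn, reducing everything to the defining property of a symmetric design with $\lambda=2$: any two distinct blocks meet in exactly two points, and dually any two points lie in exactly two blocks. The vertex count is immediate, since the vertices are the flags of $D$ and a symmetric design has $vr=vk$ flags (here $r=k$).

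For regularity I would fix a flag $(p,c)$ and describe its neighbours explicitly. A flag $(q,d)$ is adjacent to $(p,c)$ exactly when $c\cap d=\{p,q\}$; in particular $p\in d$, so $d$ is one of the $k-1$ blocks through $p$ other than $c$. Conversely, for each such block $d$ the intersection $c\cap d$ has exactly two points, one of which is $p$, and the other, say $q$, determines a unique neighbour $(q,d)$. Distinct blocks $d$ give distinct neighbours, so $(p,c)$ has exactly $k-1$ neighbours and $\Gamma_2(D)$ is $(k-1)$-regular.

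The heart of the argument is a single computation that handles the common-neighbour count for both adjacent and non-adjacent pairs. Given two distinct flags $(p,c)$ and $(q,d)$, I would show that every common neighbour $(z,e)$ forces $z\in(c\cap d)\setminus\{p,q\}$ together with $p,q,z\in e$, $c\cap e=\{p,z\}$ and $d\cap e=\{q,z\}$. When $p\neq q$ and $c\neq d$, each admissible $z$ determines $e$ uniquely: since $e\neq c$ contains $\{p,z\}$, it must be the unique second block through $\{p,z\}$ (the other being $c$), and likewise the unique second block through $\{q,z\}$; a common neighbour for this $z$ exists precisely when these two blocks coincide, and then the intersection conditions are automatically satisfied because $|c\cap e|=|d\cap e|=2$. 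As $|c\cap d|=2$, there are at most two admissible values of $z$, hence at most two common neighbours.

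From this single count the remaining parameters drop out. If $(p,c)\sim(q,d)$ then $c\cap d=\{p,q\}$, so $(c\cap d)\setminus\{p,q\}=\emptyset$ and there are no common neighbours, giving $\eta=0$. For non-adjacent pairs I would separate the degenerate cases: if $c=d$ (so $p\neq q$) the conditions $c\cap e=\{p,z\}$ and $c\cap e=\{q,z\}$ are incompatible, giving $0$; if $p=q$ (so $c\neq d$) then $c\cap d=\{p,w\}$ and the only admissible $z$ is $w$, but the two blocks through $\{p,w\}$ are exactly $c$ and $d$, leaving no valid $e$, again giving $0$; and in the generic case the bound above gives at most $2$. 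Hence $\mu\in\{0,1,2\}$. The main obstacle is precisely this last case analysis: one must track carefully how the intersection conditions pin down $e$, and recognise the coincidence in the $p=q$ case, where the required third block through a pair simply does not exist because the two blocks guaranteed by $\lambda=2$ are already $c$ and $d$ themselves.
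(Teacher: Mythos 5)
Your proposal is correct, and it reorganises the argument in a way that differs from the paper's proof at its core step. The paper proves triangle-freeness as a separate claim (three distinct points $p,q,z$ would all be forced into $c\cap d$, contradicting $|c\cap d|=2$), and then, for non-adjacent flags with $p\neq q$ and $c\neq d$, bounds the number of common neighbours by observing that every common neighbour's block must contain \emph{both} $p$ and $q$, so $\lambda=2$ caps the count at two; the degenerate cases $p=q$ and $c=d$ are dispatched by separate contradictions. You instead prove one unified lemma: any common neighbour $(z,e)$ of distinct flags $(p,c)$, $(q,d)$ satisfies $z\in(c\cap d)\setminus\{p,q\}$, with $e$ then pinned down (for each admissible $z$) as the unique second block through $\{p,z\}$, which must coincide with the unique second block through $\{q,z\}$. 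So your pivot is the point $z$ ranging over the at most two points of $c\cap d$, whereas the paper's pivot is the block $e$ ranging over the at most two blocks through $\{p,q\}$ --- a dual counting. Your version buys economy: $\eta=0$ falls out immediately since adjacency means $(c\cap d)\setminus\{p,q\}=\emptyset$, and the $p=q$ and $c=d$ cases reduce to short coincidence checks inside the same framework (your observation that for $p=q$ the two blocks through $\{p,w\}$ are already $c$ and $d$ themselves is exactly right, and your uniqueness-of-$e$ step correctly uses $|c\cap e|=|d\cap e|=2$ to make the intersection conditions automatic). The only content of the paper's proof you omit is the attainment side: the paper explicitly exhibits non-adjacent pairs in each degenerate case and constructs a non-adjacent pair with at least one common neighbour. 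Under the paper's definition of QSRG (the $\mu$-values need only lie in the given finite set, and the paper's own Observation notes that $1$ and $2$ need not both occur), membership in $\{0,1,2\}$ suffices, so this is a matter of sharpness rather than a gap in your proof.
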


\begin{proof}
    As in the case of the flag-graph $\Gamma_1(D)$, the vertices are the flags of $D$, and there are $vk$ of them.\\

    We now prove that $\Gamma_2(D)$ is $(k-1)$-regular:\\
    Let $(p,c)\in V(\Gamma_2(D))$. Then $(p,c)$ is a flag of $D$, which means $p\in c$. There are $k-1$ points $x_i\in c\setminus\{p\}$ and $k-1$ blocks $c_i\neq c$ such that $c\cap c_i=\{p,x_i\}$ for all $i\in\{1,\ldots,k-1\}$. This implies there are $k-1$ vertices $(x_i,c_i)\in V(\Gamma_2(D))$ such that $(p,c)\sim(x_i,c_i)$, $i\in\{1,\ldots,k-1\}$, and hence $\Gamma_2(D))$ is $(k-1)$-regular.\\

    We now prove $\Gamma_2(D))$ is triangle-free:\\
    Suppose $(p,c),(q,d)\in V(\Gamma_2(D))$, and $(p,c)\sim(q,d)$. Then $c\cap d=\{p,q\}$. Now suppose $(z,c_0)\in V(\Gamma_2(D)))$ is such that $(p,c)\sim(z,c_0)\sim(q,d)$. Then $c\cap c_0=\{p,z\}$ and $c_0\cap d=\{z,d\}$. This implies $c_0\cap c\cap d=\{p,q,z\}$, a contradiction. Therefore adjacent vertices have no common neighbours.\\

    Finally we will prove that non-adjacent vertices can have 0, 1, or 2 common neighbours. We start by pointing out that if $(p,c)$ and $(q,d)$ are non-adjacent, then one of the following holds, (and there are always vertices that satisfy each case):
    \begin{enumerate}
        \item~\label{pq} $p=q$ and $c\neq d$,
        \item~\label{cd} $p\neq q$ and $c=d$, or
        \item~\label{nope} $p\neq q$ and $c\neq d$ and either:
        \begin{itemize}
            \item $p\in d$ and $q\notin c$,
            \item $p\notin d$ and $q\in c$,
            \item $p\notin d$ and $q\notin c$.
        \end{itemize}
    \end{enumerate}

    Case~\ref{pq}: If $p=q$ then the vertices are $(p,c)$ and $(p,d)$. Suppose $(z,c_0)$ is a common neighbour. Then $c\cap c_0=\{p,z\}=d\cap c_0$. Since $c\neq d$ and every pair of points is in exactly 2 blocks, this forces either $c_0=c$ or $c_0=d$, which implies $k=2$, a contradiction. Therefore in this case the two vertices have no common neighbours.\\

    Case~\ref{cd}: If $c=d$ then the vertices are $(p,c)$ and $(q,c)$. If $(z,c_0)$ is a common neighbour then $c\cap c_0=\{p,z\}=\{q,z\}=c\cap c_0$. This forces $p=q$, a contradiction, so in this case the two vertices again have no common neighbours.\\

    Case~\ref{nope}: For this case suppose that for some $t\in\mathbb{Z}^+$, there is a set of common neighbours $(z_i,c_i)$ of $(p,c)$ and $(q,d)$ for all $i\in\{1,\ldots, t\}$. Then $\forall i\in\{1,\ldots, t\}$, $c\cap c_i=\{z_i,p\}$ and $d\cap c_i=\{q,z_i\}$. Since $k>2$, $\forall i\in\{1,\ldots, t\}$ $c_i\neq c,d$, and the above intersections imply $ \{p,q\}\subseteq \displaystyle \bigcap_{i=1}^tc_i$ which forces $t\leq 2$, that is, $(p,c)$ and $(q,d)$ have at most 2 common neighbours.\\

    As mentioned above, there are always vertices for each of the three cases. This implies that there are always non-adjacent vertices with no common neighbours. We will finish the proof showing that we can always find non-adjacent vertices that have at least one common neighbour. For this, take two different blocks $c\neq d$, and suppose $c\cap d=\{x,y\}$. Take a point $p\in c$ such that $p\neq x,y$. There is a block $c_0$ such that $c_0\cap c=\{x,p\}$. Note that $c_0\neq c$ since $k>2$, and $c_0\neq d$ since $c\cap d=\{x,y\}$. There is a point $q\in d$ such that $c_0\cap d=\{x,q\}$. Note $q\neq y$, since $c_0\neq c$. First we claim $(p,c)\nsim (q,d)$. This is immediate since $c\cap d=\{x,y\}$ and $p\neq x,y$. Next we claim the vertex $(x,c_0)$ is a common neighbour of $(p,c)$ and $(q,d)$. This is also immediate, since by construction $c_0\cap c=\{p,x\}$ which implies $(p,c)\sim(x,c_0)$ and $c_0\cap d=\{q,x\}$ which implies $(x,c_0)\sim(q,d)$. 
\end{proof}

\begin{ob}
    We would like to point out that although non-adjacent vertices can have 0, 1, or 2 common neighbours, we do not claim that the values 1 and 2 both occur in every case. In the examples we examined there were either 0 and 1, or 0 and 2 common neighbours in each one.
\end{ob}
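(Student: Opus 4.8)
The plan is to treat the statement as having two distinct components. The first clause --- that we \emph{do not} claim both values $1$ and $2$ occur for every biplane --- is a deliberate non-assertion and requires no proof: Theorem~\ref{t4} only establishes the upper bound $\mu\le 2$ for the number of common neighbours of two non-adjacent flags, and says nothing about which of the values $1,2$ is actually attained in a given design. The substantive content to be supported is therefore the empirical claim that, for each biplane we examined, the set of common-neighbour values realised by non-adjacent pairs is either $\{0,1\}$ or $\{0,2\}$, and never the full $\{0,1,2\}$.

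First I would extract from the Case~\ref{nope} analysis in the proof of Theorem~\ref{t4} a clean counting criterion. If $(p,c)$ and $(q,d)$ are non-adjacent with $p\neq q$ and $c\neq d$, then every common neighbour has the form $(z,e)$ with $c\cap e=\{p,z\}$ and $d\cap e=\{q,z\}$; in particular $\{p,q\}\subseteq e$. Since any two points lie in exactly two blocks, there are at most two candidate blocks $e$, and for each such $e$ (automatically $e\neq c,d$ because $k>2$) the intersection $c\cap e$ already contains $p$ and one further point, say $z$, while $d\cap e$ contains $q$ and one further point; the block $e$ contributes a unique common neighbour precisely when these two ``second'' intersection points coincide. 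Thus $\mu\bigl((p,c),(q,d)\bigr)$ equals the number of blocks through $\{p,q\}$ at which the second intersection points with $c$ and with $d$ agree, a number in $\{0,1,2\}$. This reproves the bound and, crucially, shows there is no a priori combinatorial reason for the coincidence to occur at exactly one block (forcing $\mu\in\{0,1\}$) rather than at zero or both (forcing $\mu\in\{0,2\}$).

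With this criterion in hand I would verify the empirical claim by direct computation over the finitely many known biplanes, i.e.\ the symmetric $(v,k,2)$-designs with $k\in\{4,5,6,9,11,13\}$ (including the complement of the Fano plane $(7,4,2)$ and the unique $(11,5,2)$-biplane treated in~\cite{BB}). For each isomorphism class, using an explicit list of blocks, one ranges over all pairs of non-adjacent flags, computes $\mu$ via the criterion above, and records the multiset of values; the claim is that this multiset is contained in $\{0,1\}$ or in $\{0,2\}$ for each design. Because the list of known biplanes is short and each block set is explicit, this is a routine finite check.

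The main obstacle is not the computation but the distinction between a coincidence in small cases and a genuine structural theorem. A double count of the paths of length two gives only the single relation $n_1+2n_2=vk\binom{k-1}{2}$ between the numbers $n_1,n_2$ of non-adjacent pairs with $\mu=1$ and $\mu=2$ (all such paths join non-adjacent endpoints since $\Gamma_2(D)$ is triangle-free), which is far from forcing $n_1=0$ or $n_2=0$; and the second-intersection-point criterion offers no evident invariant that would rule out a mixed spectrum of $\mu$-values. This is exactly why the statement is phrased as an observation with an explicit disclaimer rather than as a theorem: a general proof of the $\{0,1\}$-versus-$\{0,2\}$ dichotomy would require substantially more than the present analysis, and would be the hard part were one to attempt it.
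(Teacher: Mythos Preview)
The paper offers no proof of this Observation whatsoever: it is a bare empirical remark inserted between the proof of Theorem~\ref{t4} and Lemma~\ref{beta}, with no argument, no explicit list of examples, and no indication of which biplanes realise $\{0,1\}$ versus $\{0,2\}$. So there is nothing to compare your attempt against.

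That said, your proposal is sound and considerably more substantive than what the paper actually does. Your extraction of the ``second-intersection-point'' criterion from Case~\ref{nope} is correct and is a clean way to organise a computational check; the paper does not isolate this criterion explicitly. Your honest acknowledgment that the double-count $n_1+2n_2=vk\binom{k-1}{2}$ cannot by itself force $n_1=0$ or $n_2=0$, and that no structural argument is in sight, is exactly right and matches the spirit of the paper's disclaimer. One small point: the phrase ``the examples we examined'' in the paper almost certainly refers only to the biplanes with $v\le 16$ treated in Section~\ref{sp}, not to all known biplanes up through $k=13$, so your proposed verification is broader in scope than what the authors are actually claiming to have done.
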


\begin{lem}~\label{beta}
    Let $D=(P,\mathcal{B})$ and $D'=(P',\mathcal{B}')$ be two non-trivial $(v,k,2)$-biplanes, and $\Gamma_2(D)$ and $\Gamma_2(D')$ the corresponding graphs on their flags, and suppose $\Gamma_2(D)\cong\Gamma_2(D')$. If $\beta:V(\Gamma_2(D))\to V(\Gamma_2(D'))$ is such an isomorphism, then $\beta$ induces a bijection $\beta_P:P\to P'$ and a bijection $\beta_{\mathcal{B}}:\mathcal{B}\to\mathcal{B}'$ such that
     for any $p_1,p_i,p_j\in P$ and $c_1,c_i,c_j\in\mathcal{B}$ such that $(p_1,c_i),(p_1,c_j),(p_i,c_1),(p_j,c_1)\in V(\Gamma_2(D))$, the following hold:
    \begin{enumerate}[(i)]
        \item~\label{b1} If $\beta(p_1,c_i)=(p_1',c_i')$ and $\beta(p_1,c_j)=(q',c_j')$, then $p_1'=q'$, and
        \item~\label{b2} If $\beta(p_i,c_1)=(p_i',c_1')$ and $\beta(p_j,c_1)=(p_j',d')$, then $c_1'=d'$,
    \end{enumerate}
    that is, for any $p\in P,c\in\mathcal{B}$, if $p\in c$ then $\beta(p,c)=(\beta_P(p),\beta_\mathcal{B}(c))$.
\end{lem}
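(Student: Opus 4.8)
The plan is to show that $\beta$ respects the two natural partitions of the flag set: the partition into \emph{point-pencils} $S_p=\{(p,c):c\ni p\}$ and the partition into \emph{block-pencils} $T_c=\{(x,c):x\in c\}$. Once this is established the factorization is immediate. If $\beta$ carries each $S_p$ into a single point-pencil $S'_{p'}$ and each $T_c$ into a single block-pencil $T'_{c'}$, then setting $\beta_P(p)=p'$ and $\beta_{\mathcal{B}}(c)=c'$ gives well-defined maps whose bijectivity is forced, because point-pencils (resp.\ block-pencils) partition the $vk$ flags into $v$ classes of size $k$ on both sides; and since $\beta(p,c)$ then lies in both $S'_{\beta_P(p)}$ and $T'_{\beta_{\mathcal{B}}(c)}$, which meet in the single flag $(\beta_P(p),\beta_{\mathcal{B}}(c))$, we get $\beta(p,c)=(\beta_P(p),\beta_{\mathcal{B}}(c))$. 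Conditions (i) and (ii) are exactly the assertions that $S_{p_1}$ and $T_{c_1}$ map into single pencils.

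First I would record the precise neighbourhood of a flag. For $(p,c)\in V(\Gamma_2(D))$ and each point $x\in c\setminus\{p\}$, the pair $\{p,x\}$ lies in exactly two blocks, one of which is $c$; writing $d_x$ for the other, one has $c\cap d_x=\{p,x\}$, so $(x,d_x)$ is a neighbour of $(p,c)$, and these account for all $k-1$ neighbours. Consequently the first coordinates of the neighbours of $(p,c)$ sweep out $c\setminus\{p\}$ while their second coordinates sweep out the blocks through $p$ other than $c$. This description is the main combinatorial tool.

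The core of the argument is to characterize the pencils by a property preserved under graph isomorphism. By cases (1) and (2) in the proof of Theorem~\ref{t4}, any two flags lying in a common point-pencil or a common block-pencil are non-adjacent and have $0$ common neighbours; moreover, using the neighbourhood description above, every flag outside $S_p$ is adjacent to at most one flag of $S_p$ (and likewise for $T_c$), so the $k$ neighbourhoods of a pencil are pairwise disjoint. The plan is then to identify the point- and block-pencils as exactly the \emph{maximal} cocliques all of whose pairs have $0$ common neighbours, ruling out ``mixed'' extensions with the help of the neighbourhood description; verifying this tightness (that no mixed coclique realizes the same maximal configuration) is where the genuine computation lies. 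Granting it, $\beta$ — which preserves adjacency, non-adjacency and common-neighbour counts — permutes the family of all pencils, and (i) and (ii) follow once the types are matched.

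The main obstacle is distinguishing point-pencils from block-pencils intrinsically, since this cannot be done from the graph alone: the tautological correspondence $(p,c)\mapsto(c,p)$ is an isomorphism $\Gamma_2(D)\to\Gamma_2(D^{*})$ onto the flag-graph of the dual biplane which interchanges the two partitions (a consequence of the fact that in a biplane two blocks meet in exactly the two points they have in common, dually to two points lying in exactly two common blocks). Thus the family of pencils splits into two transversal partitions that $\beta$ either preserves or swaps, and the literal statement — point-pencils to point-pencils — requires pinning this down. I would resolve it using that any point-pencil meets any block-pencil in at most one flag, so the two partitions form a dual pair that $\beta$ carries to the analogous dual pair of $D'$; matching the $S$-partition of $D$ with the point-partition of $D'$ (the only alternative differing from this by the point–block duality of $D'$) then yields the stated $\beta_P:P\to P'$ and $\beta_{\mathcal{B}}:\mathcal{B}\to\mathcal{B}'$. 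This point/block duality is the subtlety to watch throughout.
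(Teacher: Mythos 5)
Your route --- characterising the point-pencils $S_p$ and block-pencils $T_c$ intrinsically and arguing that $\beta$ must permute the family of pencils --- is genuinely different from the paper's proof, which never mentions pencils: the paper works with a single local configuration, the four flags $(p_1,c_i),(p_1,c_j),(x_{ij},c_i),(x_{ij},c_j)$ carried by two blocks $c_i,c_j$ through a common point $p_1$ (these form two disjoint edges of $\Gamma_2(D)$), and tries to force $p_1'=q'$ by matching this pair of edges with the pair of edges on the two blocks through $\{p_1',q'\}$ in $D'$. Your proposal, however, has two gaps. The lesser one is the step you yourself flag: the characterisation of pencils as the maximal cocliques all of whose pairs have zero common neighbours is never verified, and it is not routine --- flags in general position (case 3 of the proof of Theorem~\ref{t4}) can also have zero common neighbours, so pairwise-zero cocliques are not automatically contained in a pencil, and mixed cocliques must actually be excluded by a computation you do not supply.

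The fatal gap is your final ``matching'' step, and it cannot be repaired, because --- exactly as your own duality remark shows --- the lemma is false as stated. Take $D'=D^{*}$, the dual biplane (a bona fide $(v,k,2)$-biplane), and $\beta(p,c)=(c,\widehat{p})$, where $\widehat{p}$ denotes the pencil of blocks through $p$: since in a biplane $c\cap d=\{p,q\}$ holds if and only if $\widehat{p}\cap\widehat{q}=\{c,d\}$, this $\beta$ is an isomorphism $\Gamma_2(D)\to\Gamma_2(D^{*})$, yet $\beta(p_1,c_i)=(c_i,\widehat{p_1})$ and $\beta(p_1,c_j)=(c_j,\widehat{p_1})$ have distinct first coordinates, violating conclusion~(i). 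So no argument can ``pin down'' which partition of $D'$ the image of the $S$-partition is; the most that can be proved is the disjunction that every isomorphism either preserves the two pencil partitions (yielding $D\cong D'$) or swaps them (yielding $D\cong (D')^{*}$), which would correspondingly weaken Theorem~\ref{thm:iso2} to ``$\Gamma_2(D)\cong\Gamma_2(D')$ iff $D\cong D'$ or $D\cong (D')^{*}$''. You should be aware that the paper's own proof founders at the analogous point: its assertion that the $\beta$-preimages of $(p_1',d_1'),(p_1',d_2'),(q',d_1'),(q',d_2')$ must be the four flags on $c_i,c_j$, together with the subsequent ``without loss of generality $c_i'=d_1'$'', is unjustified, and it fails precisely for the duality isomorphism above. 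Concretely, for the $(7,4,2)$-biplane $\Gamma_2(D)$ is the Coxeter graph, whose automorphism group has order $336=2\cdot|\mathrm{Aut}(D)|$; the extra automorphisms arise from dualities of $D$ and violate~(i) even with $D'=D$.
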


\begin{proof}
    Let $D=(P,\mathcal{B})$ and $D'=(P',\mathcal{B}')$ be two non-trivial $(v,k,2)$-biplanes, and $\Gamma_2(D)$, $\Gamma_2(D')$ the corresponding graphs on their flags, and suppose there is an isomorphism $\beta:V(\Gamma_2(D))\to V(\Gamma_2(D'))$, such that for any $(p,c),(q,d)\in V(\Gamma_2(D))$, $(p,c)\sim(q,d)\iff\beta(p,c)\sim\beta(q,d)$.\\
    
    For~(\ref{b1}), let $c_1\in\mathcal{B}$ with $p_1,p_i,p_j\in c_1$ such that $\{p_1,p_i\}=c_1\cap c_i$ and $\{p_1,p_j\}=c_1\cap c_j$ for some unique $c_i,c_j\in\mathcal{B}$, and such that $\beta(p_1,c_i)=(p_1',c_i')$ and $\beta(p_1,c_j)=(q',c_j')$. Then there is a point $x_{ij}\in P$ such that $c_i\cap c_j=\{p_1,x_{ij}\}$. This point $x_{ij}$ might or might not be in $c_1$, however we do have the following adjacencies in $\Gamma_2(D)$:\\

$(p_1,c_i)\sim(x_{ij},c_j)\iff\beta(p_1,c_i)\sim\beta(x_{ij},c_j)\iff(p_1',c_i')\sim\beta(x_{ij},c_j).$\\
    
    Also:\\
    
$(p_1,c_j)\sim(x_{ij},c_i)\iff\beta(p_1,c_j)\sim\beta(x_{ij},c_i)\iff(q',c_j')\sim\beta(x_{ij},c_i)$.\\

    If $p_1\neq q'$, then there are two blocks $d_1',d_2'\in\mathcal{B}'$ such that $d_1'\cap d_2'=\{p_1',q'\}$, and there are two adjacent pairs of vertices in $\Gamma_2(D')$: $(p_1',d_1')\sim(q',d_2')$ and $(p_1',d_2')\sim(q',d_1')$.\\

    Since $\beta$ is an isomorphism, these vertices must correspond to two pairs of adjacent vertices in $\Gamma_2(D)$ given by the elements of $(p_1,c_i)$ and $(p_1,c_j)$. Since $p_1$ is the same point of $D$ in both vertices, they must then correspond to the intersection of $c_i\cap c_j=\{p_1,x_{ij}\}$. That is:
    \begin{align*}
    &\{\beta^{-1}(p_1',d_1'),\beta^{-1}(p_1',d_2'),\beta^{-1}(q',d_1'),\beta^{-1}(q',d_2')\}\\
    &\qquad\qquad=\{(p_1,c_i),(p_1,c_j),(x_{ij},c_i),(x_{ij},c_j)\}.
\end{align*}
    Without loss of generality, suppose $c_i'=d_1'$ and $c_j'=d_2'$. We have the following:\\

$(p_1,c_i)\sim(x_{ij},c_j)\iff\beta(p_1,c_i)\sim\beta(x_{ij},c_j)\iff(p_1',c_i')\sim\beta(x_{ij},c_j)\iff(p_1',d_1')\sim(q',d_2')$, so $\beta(x_{ij},c_j)=(q',d_2')=(q',c_j')=\beta(p_1,c_j)$, which is a contradiction.\\

The proof of~(\ref{b2}) is very similar. Now suppose there is a block $c_1\in\mathcal{B}$ with $p_i,p_j\in c_1$ such that $\beta(p_i,c_1)=(p_i',c_1')$ and $\beta(p_j,c_1)=(p_j',d')$ with $c_1'\neq d'$.\\

Since $k>3$, as before, let $p_1\in c_1$ be another point, and $c_i,c_j\in\mathcal{B}$ be the blocks such that $\{p_1,p_i\}=c_1\cap c_i$ and $\{p_1,p_j\}=c_1\cap c_j$.\\

Again, we have the following adjacencies in $\Gamma_2(D)$:\\

$(p_1,c_i)\sim(p_i,c_1),$ and $(p_1,c_j)\sim(p_j,c_1)$.\\

This happens if and only if:\\

$\beta(p_1,c_i)\sim\beta(p_i,c_1)=(p_i',c_1'),$ and $\beta(p_1,c_j)\sim\beta(p_j,c_1)=(p_j',d')$.\\

Since $c_1'\neq d'$, there are two points $q_1',q_2'\in P'$ such that $c_1'\cap d'=\{q_1',q_2'\}$. This induces two pairs of adjacent vertices in $\Gamma_2(D')$:\\

$(q_1',c_1')\sim(q_2',d')$, and $(q_1',d')\sim(q_2',c_1')$.\\

Since $\beta$ is an isomorphism, these must correspond to two pairs of adjacent vertices in $\Gamma_2(D)$ induced by the elements of the flags $(p_i,c_1)$ and $(p_j,c_1)$. Since the block $c_1$ is the same in both vertices, they must correspond to the intersection $\{p_i,p_j\}=c_1\cap d$ for some block $d\in\mathcal{B}$. Therefore:\\

$\{\beta(p_1,c_i),\beta(p_1,c_j),\beta(p_i,c_1),\beta(p_j,c_1)\}=\{(q_1',c_1'),(q_2',c_1'),(q_1',d'),(q_2',d')\}$.\\

Suppose without loss of generality that $p_i'=q_1'$. Then:\\

$\beta(p_1,c_i)\sim\beta(p_i,c_1)=(p_i',c_1')=(q_1',c_1')\sim(q_2',d')$, so $\beta(p_1,c_i)=(q_2',d')$.\\

From above:\\

$\beta(p_1,c_j)\sim\beta(p_j,c_1)=(p_j',d')=(q_2',d')$, a contradiction which completes the proof.
\end{proof}

\begin{thm}
\label{thm:iso2}
    Let $D=(P,\mathcal{B})$ and $D'=(P',\mathcal{B}')$ be two $(v,k,2)$-biplanes, and $\Gamma_2(D)$, $\Gamma_2(D')$ the corresponding graphs on their flags. Then $D\cong D'$ if and only if $\Gamma_2(D)\cong\Gamma_2(D')$.
\end{thm}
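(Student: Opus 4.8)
The plan is to prove the two implications separately: the forward direction is handled directly, and the converse is extracted almost immediately from Lemma~\ref{beta}.

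For the forward implication, suppose $D\cong D'$, witnessed by a bijection $\alpha\colon P\to P'$ that preserves incidence (and hence non-incidence), inducing the block bijection $c\mapsto\alpha[c]$. First I would define the induced map on flags $\overline{\alpha}\colon V(\Gamma_2(D))\to V(\Gamma_2(D'))$ by $\overline{\alpha}(p,c)=(\alpha(p),\alpha[c])$, which is a bijection on flags exactly as in the proof of Theorem~\ref{t3}. The only thing left is to check that $\overline{\alpha}$ preserves adjacency, and the key observation is that an incidence-preserving bijection also preserves block intersections: for any two blocks $c,d$ one has $\alpha[c]\cap\alpha[d]=\alpha[c\cap d]$, since $x\in c\cap d$ iff $\alpha(x)\in\alpha[c]\cap\alpha[d]$. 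Consequently $c\cap d=\{p,q\}$ holds if and only if $\alpha[c]\cap\alpha[d]=\{\alpha(p),\alpha(q)\}$, which is precisely the condition that $(p,c)\sim(q,d)$ in $\Gamma_2(D)$ iff $\overline{\alpha}(p,c)\sim\overline{\alpha}(q,d)$ in $\Gamma_2(D')$. Hence $\overline{\alpha}$ is a graph isomorphism.

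For the converse, suppose $\beta\colon V(\Gamma_2(D))\to V(\Gamma_2(D'))$ is an isomorphism. By Lemma~\ref{beta}, $\beta$ induces bijections $\beta_P\colon P\to P'$ and $\beta_{\mathcal{B}}\colon\mathcal{B}\to\mathcal{B}'$ with $\beta(p,c)=(\beta_P(p),\beta_{\mathcal{B}}(c))$ for every flag $(p,c)$. The remaining task is to show $\beta_P$ is a design isomorphism, i.e. that $p\in c$ if and only if $\beta_P(p)\in\beta_{\mathcal{B}}(c)$. If $p\in c$, then $(p,c)$ is a flag, so $\beta(p,c)=(\beta_P(p),\beta_{\mathcal{B}}(c))$ is a flag of $D'$, giving $\beta_P(p)\in\beta_{\mathcal{B}}(c)$; thus $\beta_P[c]\subseteq\beta_{\mathcal{B}}(c)$. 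Since $\beta_P$ is a bijection, $|\beta_P[c]|=|c|=k=|\beta_{\mathcal{B}}(c)|$, which upgrades the containment to the equality $\beta_{\mathcal{B}}(c)=\beta_P[c]$. This identifies the block map with the image map, and since $\beta_P$ is a bijection we get $\beta_P(p)\in\beta_{\mathcal{B}}(c)=\beta_P[c]$ iff $p\in c$. Therefore $\beta_P$ preserves incidence, and $D\cong D'$.

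The main obstacle is essentially concentrated in Lemma~\ref{beta}, which we are assuming here; once we know that the graph isomorphism $\beta$ acts coordinatewise on flags, the theorem follows quickly. Within the theorem proper, the only genuine subtlety is the cardinality argument promoting $\beta_P[c]\subseteq\beta_{\mathcal{B}}(c)$ to equality (so that $\beta_{\mathcal{B}}$ really is the image map, rather than merely compatible with it), together with the observation in the forward direction that incidence-preserving bijections preserve block intersections, which is exactly what allows the $\Gamma_2$-adjacency condition $c\cap d=\{p,q\}$ to transfer.
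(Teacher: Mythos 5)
Your proof is correct and takes essentially the same approach as the paper: the forward direction transports flags via $(p,c)\mapsto(\alpha(p),\alpha[c])$ using the fact that an incidence-preserving bijection preserves block intersections, and the converse invokes Lemma~\ref{beta} and then reads incidence off the flag condition. If anything, your cardinality argument upgrading $\beta_P[c]\subseteq\beta_{\mathcal{B}}(c)$ to equality is slightly more careful than the paper's chain of equivalences, which tacitly uses the implication $\beta_P(p)\in\beta_{\mathcal{B}}(c)\Rightarrow p\in c$ even though $\beta$ is only defined on flags.
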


\begin{proof}
    Let $D=(P,\mathcal{B})$ and $D'=(P',\mathcal{B}')$ be two $(v,k,2)$-biplanes, and $\Gamma_2(D)$, $\Gamma_2(D')$ the corresponding graphs on their flags.\\
    
    First suppose $D\cong D'$. Then there is a bijection $\beta:P\to P'$ such that for any $p\in P$ and any $c\in\mathcal{B}$, $p\in c$ if and only $\beta(p)\in\beta[c]$, where $\beta[c]=\{\beta(p_1), \ldots \beta(p_k)|p_1,\ldots, p_k\in c\}$.\\

    Now consider $\Gamma_2(D)$ and $\Gamma_2(D')$, and let $(p,c),(q,d)\in V(\Gamma_2(D))$. Then $p,q\in P$, $c,d\in\mathcal{B}$, $p\in c$ and $q\in d$, so $\beta(p),\beta(q)\in P'$, $\beta[c],\beta[d]\in\mathcal{B}'$, $\beta(p)\in \beta[c]$ and $\beta(q)\in \beta[d]$, which implies $(\beta(p),\beta[c]),(\beta(q),\beta[d])\in V(\Gamma_2(D'))$.\\
    
    By definition, $(p,c)\sim (q,d)$ if and only if $c\cap d=\{p,q\}$, and $(\beta(p),\beta[c])\sim(\beta(q),\beta[d])$ if and only if $\beta[c]\cap\beta[d]=\{\beta(p),\beta(q)\}$. Notice, however, that since $\beta$ is an isomorphism between $D$ and $D'$ it follows that $c\cap d=\{p,q\}$ if and only if $\beta[c]\cap\beta[d]=\{\beta(p),\beta(q)\}$, so $\Gamma_2(D)\cong\Gamma_2(D')$.\\

    Now suppose $\Gamma_2(D)\cong\Gamma_2(D')$, with an isomorphism $\beta:V(\Gamma_2(D))\to V(\Gamma_2(D'))$ such that for any $(p,c),(q,d)\in V(\Gamma_2(D))$, $(p,c)\sim(q,d)\iff\beta(c,d)\sim\beta(q,d)$. By lemma~\ref{beta}, $\beta$ induces a bijection $\beta_P:P\to P'$ and a bijection $\beta_\mathcal{B}:\mathcal{B}\to\mathcal{B}'$, such that $\forall p\in P, c\in\mathcal{B}$, with $(p,c)\in V(\Gamma_2(D)$ there are $p'\in P', c'\in\mathcal{B}'$ such that $\beta(p,c)=(p',c')=(\beta_P(p),\beta_\mathcal{B}(c))$.\\

    Let $p\in P$ and $c\in\mathcal{B}$. Then:\\

    $p\in c\iff(p,c)\in V(\Gamma_2(D))\iff\beta(p,c)\in V(\Gamma_2(D'))\iff(\beta_P(p),\beta_\mathcal{B}(c))\in V(\Gamma_2(D'))\iff\beta_P(p)\in\beta_\mathcal{B}(c)$, so $D\cong D'$.
\end{proof}

\section{Spectra}\label{sp}
As we mentioned before, our original motivation comes from the work of Blokhuis and Brouwer~\cite{BB}, in which they proved that the graph $\Gamma_2(D)$ on the flags of the unique $(11,5,2)$-biplane $D$ is determined by the spectrum of its adjacency matrix. 
In this section we describe the spectra of the graphs $\Gamma_1(D)$ and $\Gamma_2(D)$ for all the biplanes with at most 16 points.

\subsection{The spectrum of the graph $\Gamma_1(D)$}

Recall that the incidence graph $\Gamma_D$ of a $(v,b,r,k,\lambda)$-BIBD $D = (P,\mathcal{B})$ is the bipartite  graph with vertex set $P\cup\mathcal{B}$, where two vertices are adjacent if they form an incident point-block pair, that is,  if they form a flag of $D$. \\

Any two non-isomorphic BIBDs with the same parameters give rise to a pair of non-isomorphic incidence graphs that have the same spectrum. 
For example, the four non-isomorphic $(6,20,10,3,4)$-BIBDs give rise to four non-isomorphic cospectral  graphs  with spectrum
\[
\sqrt{30}, (\sqrt{6})^5, 0^{14}, (-\sqrt{6})^5, -\sqrt{30},
\]
where we write $\theta^m$ to denote that the eigenvalue  $\theta$ has multiplicity $m$. 
\\

A bipartite graph such that all the vertices in the same part of the bipartition have the same degree is called \emph{biregular}. 
Every regular bipartite graph is biregular. 
The incidence graph $\Gamma_D$ of a $(v,b,r,k,\lambda)$-BIBD $D$ is biregular with degrees $r$ and $k$.
If $N$ is the point-block incidence matrix of  $D$, the adjacency matrix $A$ of $\Gamma_D$ has the form
\[
A = 
\begin{bmatrix}
0 & N\\
N^\top & 0
\end{bmatrix}.
\]
Since 
\[
A^2 = 
\begin{bmatrix}
NN^\top & 0\\
0 & N^\top N
\end{bmatrix},
\]
the characteristic polynomial of $A^2$ factors as the product of the characteristic polynomials of $NN^\top$ and  $N^\top N$. 
But $NN^\top$ and  $N^\top N$ have the same non-zero eigenvalues, so if $J$ denotes the all-ones matrix, then $NN^\top =(r-\lambda)I + \lambda J$, and the spectrum of $NN^\top$ and $N^\top N$ is 
\[
rk, (r-\lambda)^{v-1}.
\]
Because the non-zero eigenvalues of $A$ are  the square roots of the non-zero eigenvalues of $A^2$, the spectrum of the incidence graph $\Gamma_D$ is  
\[
\sqrt{rk}, (\sqrt{r-\lambda})^{v-1}, 0^{b-v}, (-\sqrt{r-\lambda})^{v-1}, -\sqrt{rk}.
\]\\

The flag-graph $\Gamma_1(D)$ of a $(v,b,r,k,\lambda)$-BIBD $D$ is, by definition, the line graph $L(\Gamma_D)$ of the incidence graph $\Gamma_D$. 
Since $\Gamma_D$ is a biregular graph, it follows that the spectrum of  $\Gamma_1(D)$  is
\begin{align*}
r+k-2,&\left(\frac{r+k-4+\sqrt{(k-r)^2-4(r-\lambda)} }{2}\right)^{v-1}, (k-2)^{b-v}, \\
&\left(\frac{k+r-4 - \sqrt{(k-r)^2-4(r-\lambda)}}{2}\right)^{v-1}, (-2)^{bk-b-v+1}.
\end{align*}
In the example above, the flag-graph $\Gamma_1(D)$ of any of the four non-isomorphic $(6, 20, 10, 3, 4)$-BIBDs $D$ has spectrum
\[
11, \left(\frac{9+\sqrt{73}}{2}\right)^{5}, 1^{14} ,\left(\frac{9-\sqrt{73}}{2}\right)^{5}, (-2)^{35}.
\]\\
 
Let $D$ be a $(v,k,\lambda)$-symmetric design.  
By definition, the incidence graph $\Gamma_D$  is regular bipartite with spectrum \[k, (\sqrt{k-\lambda})^{v-1}, (-\sqrt{k-\lambda})^{v-1} , -k.\] 
Since the flag-graph $\Gamma_1(D)$ is, by definition, the line graph $L(\Gamma_D)$,  the  spectrum of $\Gamma_1(D)$ is 
\[2k-2, (k - 2 + \sqrt{k-\lambda})^{v-1}, (k - 2-\sqrt{k-\lambda} )^{v-1}, (-2)^{vk - 2v + 1}.\] 
As mentioned earlier, in \cite{HR2} Hoffman and Ray-Chaudhuri proved that if a graph is regular and connected on $vk$ vertices with these eigenvalues then it is $\Gamma_1(D)$ for some $(v,k,\lambda)$-symmetric design $D$ (with exactly one exception if $(v,k,\lambda)=(4,3,2)$).
If $D$ is a $(v,k,2)$-biplane,  the spectrum of $\Gamma_1(D)$ is 
\[
2(a+1), (a + \sqrt{a})^{v-1}, (a -\sqrt{a} )^{v-1}, (-2)^{va + 1}
\]
where $a = k-2$.\\

Now we describe the spectra of the flag-graphs $\Gamma_1(D)$ when $D$ is a $(v,k,2)$-biplane with $v  \leq 16$ points.\\ 

If $D$ is the unique $(4,3,2)$-biplane, then $a= 3-2 = 1$ and hence $\Gamma_1(D)$ 
has spectrum $4, 2^3, 0^3, (-2)^5$. This graph is not determined by its spectrum, since there is a unique graph $\Gamma$ with the same spectrum as $\Gamma_1(D)$ which is not a line graph. 
This graph $\Gamma \not\cong\Gamma_1(D)$ is described in~\cite{HR2}.\\ 

If $D$ is the unique $(7,4,2)$-biplane, 
then $a = 4-2 =2$ and hence the spectrum of $\Gamma_1(D)$ is $6, (2+\sqrt{2})^6, (2-\sqrt{2})^6,(-2)^{15}$. 
As a consequence of the uniqueness of $D$, we have that $\Gamma_1(D)$ is the unique graph up to isomorphism with this spectrum.
Therefore, $\Gamma_1(D)$ is determined by its spectrum.\\ 

If $D$ is the unique $(11,5,2)$-biplane, $a=5-2=3$, and the spectrum of $\Gamma_1(D)$ is $8,(3+\sqrt{3})^{10},(3-\sqrt{3})^{10},(-2)^{34}$. 
The uniqueness of $D$ implies that $\Gamma_1(D)$ is determined by its spectrum.\\

Let $D_1,D_2$ and $D_3$ be the three non-isomorphic $(16,6,2)$-biplanes. 
Since these three biplanes have the same parameters, the  graphs $\Gamma_1(D_1)$, $\Gamma_1(D_2)$ and $\Gamma_1(D_3)$ are cospectral with  spectrum $10,6^{15},2^{15},(-2)^{65}$. 
Since these three biplanes are non-isomorphic, by Theorem~\ref{thm:iso2} we have that  $\Gamma_1(D_1)$, $\Gamma_1(D_2)$ and $\Gamma_1(D_3)$ are pairwise non-isomorphic. 
It follows that these graphs  are not determined by their spectrum.

\subsection{The spectrum of the graph $\Gamma_2(D)$}
The computations in this section were carried out in sage~\cite{sage}. 
The graph $\Gamma_2(D)$ defined on the flags of the unique $(4,3,2)$-biplane $D$ is the disjoint union of three $4$-cycles $C_4$. Therefore, the graph $\Gamma_2(D)$ is determined by its spectrum.\\

The graph $\Gamma_2(D)$ on the flags of the unique $(7,4,2)$-biplane $D$ is the Coxeter graph (see~\cite[Section 4.6]{GR}). 
This graph is  determined by its spectrum.\\

Blokhuis and Brouwer \cite{BB} proved that the graph $\Gamma_2(D)$ of the unique $(11,5,2)$-biplane $D$ is determined by its spectrum.\\

Let $D_1$, $D_2$ and $D_3$ be the three non-isomorphic $(16,6,2)$-biplanes. 
\begin{enumerate}
\item The graph $\Gamma_2(D_1)$ is the disjoint union of six graphs. 
Each of these six graphs is isomorphic to the Clebsch graph, which is determined by its spectrum (see~\cite[Section 10.6 ]{GR}).
Therefore, the graph of $\Gamma_2(D_1)$ is  determined by its spectrum.

 \item The graph $\Gamma_2(D_2)$ is the disjoint union of three graphs. 
Each of these three graphs is isomorphic to a graph $\Gamma$. 
The spectrum of $\Gamma$ is $5^1, 1^{18}, (1+2\sqrt{2})^2, (1-2\sqrt{2})^2, (-3)^9$. 
We do not know whether this graph $\Gamma$ is determined by its spectrum. 
 
\item The graph $\Gamma_2(D_3)$ is the disjoint union of two non-isomorphic graphs. The spectrum of one of them is
$5^1, 1^{34}, (1+2\sqrt{2})^6,(1-2\sqrt{2})^6, (-3)^{17}$ and the spectrum of the other is $5^1,3^{4},1^{14},(-1)^{4},(-3)^9$. 
We do not know if any of these graphs is determined by its spectrum.
\end{enumerate}

\end{document}